\newtheorem{thm}{Theorem}[section]
\newtheorem{prop}[thm]{Proposition}
\newtheorem{lem}[thm]{Lemma}
\newtheorem{cor}[thm]{Corollary}
\newtheorem{question}[thm]{Question}
\theoremstyle{remark}
\newtheorem{rem}[thm]{Remark}
\theoremstyle{definition}
\newtheorem{defi}[thm]{Definition}
\newcommand{\Z}{\mathbb{Z}}
\newcommand{\Q}{\mathbb{Q}}
\newcommand{\R}{\mathbb{R}}
\newcommand{\N}{\mathbb{N}}
\newcommand{\C}{\mathbb{C}}
\DeclareMathOperator{\map}{map}
\def\epsilon{\varepsilon}
\DeclareMathOperator{\Top}{{\sf Top}}
\def\args{\;\cdot\;}
\def\gvertex#1{%
  \fill #1 circle(0.1);
}
\def\isv#1{%
  \| #1 \|_{\Z}}
\def\ilone#1{%
  \| #1 \|_{1,\Z}}
\def\sv#1{%
  \| #1 \|}
\def\fcl#1{%
  [#1]_\Z}
\def\fclr#1{%
  [#1]_R}
\def\fface#1#2{%
  {#2}\rfloor_{#1}%
}
\def\bface#1#2{%
  {}_{#1}\lfloor{#2}%
}
\long\def\forget#1{}
\def\draftinfo{}
\author{Clara L\"oh}
\title{Odd manifolds of small integral simplicial volume}
\date{\today.\ \copyright{\ C.~L\"oh 2015}. 
    This work was supported by the CRC~1085 \emph{Higher Invariants} 
    (Universit\"at Regensburg, funded by the DFG).
    \draftinfo\\
     MSC~2010 classification: 55N10, 57N65}
\begin{document}

\begin{abstract}
 Integral simplicial volume is a homotopy invariant of oriented closed
 connected manifolds, defined as the minimal weighted number of
 singular simplices needed to represent the fundamental class with
 integral coefficients.  We show that odd-dimensional spheres are the
 only manifolds with integral simplicial volume equal
 to~$1$. Consequently, we obtain an elementary proof that, in general,
 the integral simplicial volume of (triangulated) manifolds is
 \emph{not} computable.  
\end{abstract}

\phantom{.}
\vspace{-.1\baselineskip}

\maketitle

\section{Introduction}

Simplicial volumes are homotopy invariants for oriented closed
connected manifolds, defined as the infimal weighted number of
singular simplices needed to represent the fundamental class with
respect to the given coefficients. Originally, Gromov introduced the
simplicial volume with \mbox{$\R$-co}\-efficients~\cite{munkholm,vbc,mapsimvol}
in his study of Mostow rigidity.  In the context of the connection of
simplicial volume with Betti numbers or $L^2$-Betti numbers, integral
versions of simplicial volume play a central
role~\cite{gromov,mschmidt,loehpagliantini}.

We will consider the most basic integral version: The
\emph{integral simplicial volume} of an oriented closed connected
$n$-manifold~$M$ is defined as
\[ \isv M := \min \bigl\{ |c|_1
   \bigm| \text{$c \in C_n(M;\Z)$ is a $\Z$-fundamental cycle of~$M$}
   \bigr\} 
   \in \N,
\] 
where $|c|_1 := \sum_{j=0}^k |a_j|$ for any chain~$c = \sum_{j=0}^k
a_j \cdot \sigma_j$ in~$C_*(M;\Z)$ (in reduced form).  Integral
simplicial volume lacks most of the convenient inheritance properties
of ordinary simplicial volume (such as multiplicativity under finite
coverings and gluing formulae), and hence is rather difficult to
control.

In the following, we will characterise manifolds with integral simplicial
volume equal to~$1$, and derive an elementary non-computability statement 
for integral simplicial volume:

\begin{thm}
  \label{thm:odd1}
  Let $n \in \N_{>0}$ and let $M$ be an oriented closed connected
  $n$-manifold.  Then $\isv M =1$ if and only if $n$ is odd and $M$ is homotopy
  equivalent to~$S^n$.
\end{thm}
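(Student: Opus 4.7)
For the easy implication, I would exhibit, for $n$ odd, a single-simplex fundamental cycle of $S^n$ by taking $\sigma\colon\Delta^n\to S^n$ to be a continuous representative of the quotient $\Delta^n\to\Delta^n/\partial\Delta^n\cong S^n$. Each of the $n+1$ faces $\sigma\circ\partial_i$ is then the same constant map to the collapse point, so the chain $\partial\sigma=\sum_{i=0}^n(-1)^i\,\sigma\circ\partial_i$ vanishes in $C_{n-1}(S^n;\Z)$ precisely because $\sum_{i=0}^n(-1)^i=0$ when $n+1$ is even, while the quotient has degree one and so $[\sigma]$ generates $H_n(S^n;\Z)$. This gives $\isv{S^n}\le 1$, the reverse inequality is automatic, and homotopy invariance of integral simplicial volume extends the conclusion to every $M$ homotopy equivalent to~$S^n$.

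For the converse, fix a fundamental cycle $\epsilon\,\sigma$ of $M$ with $\epsilon\in\{\pm 1\}$ and $\sigma\colon\Delta^n\to M$ a single simplex. The parity of $n$ is immediate from $\partial\sigma=0$: grouping the $n+1$ faces $\sigma\circ\partial_i$ by their image maps $\Delta^{n-1}\to M$, the vanishing in $C_{n-1}(M;\Z)$ forces, within each group, the numbers of even- and odd-indexed faces to coincide, and summing over groups shows $n+1$ is even. To deduce $M\simeq S^n$, the plan is to produce a degree-one map $S^n\to M$; once such a map is available, the classical argument (degree-one maps are surjective on $\pi_1$, so $\pi_1(M)=1$ for $n\ge 2$; they split homology with any coefficients, so $H_\ast(M;\Z)\cong H_\ast(S^n;\Z)$; Whitehead concludes; the case $n=1$ is trivial since $S^1$ is the only oriented closed connected $1$-manifold) finishes the proof.

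For the degree-one map itself, I would first show that $\sigma$ is surjective: otherwise some $p\in M$ missed by $\sigma$ would place $\sigma$ in $C_n(M\setminus\{p\};\Z)$, so $[\sigma]$ would vanish in $H_n(M,M\setminus\{p\};\Z)\cong\Z$, contradicting $[\sigma]=[M]$. Setting $B:=\sigma(\partial\Delta^n)\subseteq M$, the image $B$ has covering dimension at most $n-1$, so $H_n(B;\Z)=0$ and $B\ne M$; the factorization $\sigma\colon\Delta^n\to M\to M/B$ then descends through $\Delta^n\to\Delta^n/\partial\Delta^n$ to a map $\bar\sigma\colon S^n\to M/B$ which, by naturality and the long exact sequence of $(M,B)$, represents $[M]$ under $\widetilde H_n(M/B;\Z)\cong H_n(M,B;\Z)\cong H_n(M;\Z)$. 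The main remaining obstacle — and, I expect, the real technical heart of the proof — is to turn $\bar\sigma$ into a degree-one map $S^n\to M$: this amounts to showing that $B\hookrightarrow M$ is trivial on $\pi_1$ and on positive reduced homology, so that the collapse $M\to M/B$ is a weak equivalence. Here the constraint $\partial\sigma=0$ must be used structurally, since the face pairing presents $B$ as the image of a singular chain with at most $(n+1)/2$ simplices in dimension $n-1$, opening the way for an induction on the dimension in which the same small-complexity input on $B$ yields the required triviality.
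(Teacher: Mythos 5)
Your easy direction (quotient map $\Delta^n\to\Delta^n/\partial\Delta^n\cong S^n$; $n$ odd makes the $n+1$ constant faces cancel) matches the paper, and the parity observation for the converse is also the paper's Lemma~\ref{lem:cyclematching}(1). The overall target of the converse — a non-zero degree map $S^n\to M$ — also agrees in spirit with what the paper eventually proves (Theorem~\ref{thm:odd1mult}). But the route you propose has a genuine gap that is not just a matter of detail.

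The claim ``if $B\hookrightarrow M$ is trivial on $\pi_1$ and on positive reduced homology, then $M\to M/B$ is a weak equivalence'' is false. If the inclusion $B\hookrightarrow M$ is nullhomotopic with $(M,B)$ a cofibration, then $M/B\simeq M\vee\Sigma B$, so the collapse is an equivalence only if $\Sigma B$ is weakly contractible, i.e.\ $B$ itself is acyclic and simply connected. Merely knowing that $H_*(B)\to H_*(M)$ vanishes does not kill $\widetilde H_*(B)$, and the long exact sequence of $(M,B)$ then does not give $H_*(M)\cong H_*(M,B)$ (you only get injectivity of $H_n(M)\to H_n(M,B)$ from $H_n(B)=0$; identifying the two groups additionally requires $H_{n-1}(B)=0$, which you have not shown and which is not true for the natural analogue inside the model space). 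So the reduction to ``show $B\hookrightarrow M$ is $\pi_1$- and $\widetilde H_*$-trivial'' is strictly weaker than what you actually need, and the concluding suggestion of an ``induction on dimension using small complexity of $B$'' is not a proof. In effect, proving $B$ weakly contractible inside $M$ is essentially as hard as the theorem itself; collapsing all of $\partial\Delta^n$ at once forgets precisely the pairing information that makes the problem tractable.

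The paper's proof keeps that information. Because $\partial\sigma=0$ forces a matching $\pi$ that pairs the faces of $\Delta^n$, $\sigma$ factors through the quotient $M_\pi=\Delta^n/\!\sim_\pi$ rather than through $\Delta^n/\partial\Delta^n$. Two separate arguments then do the work your weak-equivalence step was supposed to do: Proposition~\ref{prop:modelpi1} shows $M_\pi$ is simply connected (a direct computation with the cell structure of $M_\pi$), which via lifting to the universal cover and a transfer argument gives $\pi_1(M)=1$; and Lemma~\ref{lem:oddhomology} shows $H_k(M;\Z)=0$ for $0<k<n$ by writing out the Poincar\'e duality cap product with the single-simplex cycle $\sigma$ and using the matching $\pi$ to exhibit $\fface{k}{\sigma}$ as a boundary (odd $k$) or a non-cycle (even $k$). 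Hurewicz and Whitehead then finish. I would encourage you to work through how the matching $\pi$ enters the cap-product computation — that is the step your approach is missing and cannot easily recover by collapsing $\partial\Delta^n$ wholesale.
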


\begin{rem}
  In view of the positive solution of the Poincar\'e conjecture,
  the conditions in Theorem~\ref{thm:odd1} are also equivalent to 
  $M \cong_{\Top} S^n$.
\end{rem}

\begin{cor}\label{cor:noncomp}
  Let $n \in \N_{\geq 5}$ be odd. 
  \begin{enumerate}
    \item It is not decidable whether a given triangulated oriented closed 
      connected $n$-manifold~$M$ satisfies~$\isv M =1$.
    \item In general, the integral simplicial volume of triangulated
      oriented closed connected $n$-manifolds is not computable.
  \end{enumerate}
\end{cor}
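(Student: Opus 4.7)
My plan is to reduce both parts of Corollary~\ref{cor:noncomp} to the classical undecidability of the $n$-sphere recognition problem. The three ingredients I intend to combine are: Theorem~\ref{thm:odd1}; the topological Poincar\'e conjecture in dimensions $n \geq 5$ (Smale, Newman); and Novikov's theorem asserting that, for every $n \geq 5$, there is no algorithm that, given a finite simplicial complex which is a triangulation of a closed $n$-manifold, decides whether the underlying manifold is homeomorphic to~$S^n$.

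For part~(1), the reduction runs as follows. Because $n$ is odd, Theorem~\ref{thm:odd1} gives that $\isv M = 1$ if and only if $M$ is homotopy equivalent to~$S^n$; because $n \geq 5$, the Poincar\'e conjecture upgrades this to $M \cong_{\Top} S^n$, as already noted in the remark following Theorem~\ref{thm:odd1}. A hypothetical algorithm deciding the predicate $\isv M = 1$ on triangulated closed oriented connected $n$-manifolds would then decide $n$-sphere recognition on the same inputs, contradicting Novikov's theorem.

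Part~(2) follows by the standard reduction from (1): an algorithm that, on input a triangulated~$M$, produces the integer $\isv M$ could be composed with the trivial check ``is the output equal to~$1$?'' to decide the problem of part~(1), contradicting~(1).

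Everything non-trivial is contributed by the external inputs, so there is no real mathematical obstacle in the argument itself. The one point that needs some care is the precise citation: I would want a version of Novikov's unrecognizability statement phrased for triangulations of closed PL manifolds (matching the ``triangulated oriented closed connected $n$-manifold'' input format of Corollary~\ref{cor:noncomp}), rather than the weaker statement about recognising which arbitrary simplicial complexes are manifolds in the first place.
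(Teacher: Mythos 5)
Your proposal is correct and follows essentially the same route as the paper: Theorem~\ref{thm:odd1} identifies the set of triangulated $n$-manifolds with $\isv M = 1$ with the set of triangulated manifolds homotopy equivalent (equivalently, for $n\geq 5$, homeomorphic) to~$S^n$, which is undecidable by the sphere-recognition theorem; part~(2) then follows by the trivial reduction you describe. The only cosmetic difference is the citation: the paper attributes the undecidability of sphere recognition to Markov, whereas you cite Novikov (and make the Poincar\'e-conjecture upgrade explicit, which the paper relegates to a remark since the undecidable set can be phrased directly in terms of homotopy equivalence).
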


The basic strategy of the proof of Theorem~\ref{thm:odd1} is as follows:
\begin{itemize}
  \item We show that any manifold of integral simplicial volume~$1$ of
    dimension~$> 1$ is simply connected. This step is based on
    quotients of the standard simplex that model singular \emph{cycles}
    consisting of a single simplex (Section~\ref{sec:model}).
  \item Similar to the Betti number estimates for integral simplicial
    volume~\cite[Example~14.28]{lueck}, we will use Poincar\'e duality and an
    explicit formula for the cap-product to prove that any manifold of
    integral simplicial volume equal to~$1$ is an integral homology sphere 
    (Lemma~\ref{lem:oddhomology}).
  \item Finally, we apply the Hurewicz and Whitehead theorems to conclude.
\end{itemize}

More generally, this technique also gives a characterisation for
multiples of the fundamental class (Theorem~\ref{thm:odd1mult}), which
is interesting in the context of integral approximations of simplicial
volume (Section~\ref{subsec:intapprox}).

We then derive the non-computability statement of
Corollary~\ref{cor:noncomp} from Theorem~\ref{thm:odd1} by comparison
with the sphere-recognition problem. Alternatively, this result can
also be obtained from Weinberger's non-computability result for
ordinary simplicial volume of homology spheres, which is based on far
less elementary techniques~\cite[Theorem~2, p.~88]{weinberger}.


More concretely, already the case of integral simplicial volume equal
to~$2$ is more involved than Theorem~\ref{thm:odd1}: For example, in
even dimensions, spheres are \emph{not} the only oriented closed
connected manifolds of integral simplicial volume equal to~$2$
(Proposition~\ref{prop:isvs1}). Moreover, the oriented closed connected 
$3$-manifolds with integral simplicial volume equal to~$2$ are precisely 
(up to homotopy/homeomorphism) $S^1 \times S^2$, $\R P^3$, and $L(3,1)$ 
(Proposition~\ref{prop:isv32}, Remark~\ref{rem:32}).

\subsection*{Organisation of this article}
In Section~\ref{sec:model}, we study the combinatorics of singular
cycles that consist of a single singular simplex. Section~\ref{sec:1}
contains a proof of Theorem~\ref{thm:odd1} and a generalisation to
multiples of the fundamental class; moreover, we discuss the relation
with integral approximations of simplicial volume. In
Section~\ref{sec:2}, we discuss some aspects of the case that the
integral simplicial volume equals~$2$. Finally, in
Section~\ref{sec:noncomp} we briefly introduce the relevant setup from
computability theory and derive Corollary~\ref{cor:noncomp}.

\section{Model spaces of cycles of a single singular simplex}
\label{sec:model}

Singular cycles that consist of a single singular simplex have a very
restricted combinatorial structure. The corresponding model spaces for
such cycles are simply connected (Proposition~\ref{prop:modelpi1}),
but fail to be closed manifolds in general
(Proposition~\ref{prop:modelmfd}).

\subsection{The model spaces}

A model space associated with a cycle consisting of a single simplex
is a standard simplex where the faces are glued according to their
cancellation in the singular boundary.

\begin{defi}[matching, model space]
  Let $n \in \N$ be odd. 
  \begin{itemize}
    \item An \emph{$n$-matching} is a bijection~$\{0,2,\dots, n-1\} 
      \longrightarrow \{1,3,\dots,n\}$.
    \item For $j \in \{0,\dots,n\}$, we denote the inclusion of the $j$-th 
      face of~$\Delta^n$ by~$i_j \colon \Delta^{n-1} \longrightarrow \Delta^n$.
    \item Let $\pi$ be an $n$-matching. Then the \emph{model 
      space associated with~$\pi$} is 
      \[ M_\pi := \Delta^n / \sim_\pi, 
      \]
      where $\sim_\pi$ is the equivalence relation generated by
      \[  \forall_{j \in \{0,2,\dots, n-1\}} \forall_{t \in \Delta^{n-1}}
          \quad
           i_j(t) \sim_\pi i_{\pi(j)}(t).
      \]
      We denote the canonical projection~$\Delta^n \longrightarrow M_\pi$
      by~$\sigma_\pi$. 
  \end{itemize}
\end{defi}

\begin{rem}[model class]
  Let $n \in \N$ be odd and let $\pi$ be an $n$-matching. Then 
  \begin{align*}
    \partial_n \sigma_\pi 
    & = \sum_{j=0}^n (-1)^j \cdot \sigma_\pi \circ i_j 
      = \sum_{j = 0}^{(n-1)/2} ( \sigma_\pi \circ i_{2\cdot j} 
      - 
      \sigma_\pi \circ i_{\pi(2\cdot j)}) 
      = 0
    ,
  \end{align*}
  where $\partial_n \colon C_n(X;\Z) \longrightarrow C_{n-1}(X;\Z)$ is
  the singular boundary operator. Hence, $\sigma_\pi \in C_*(M_\pi;\Z)$
  is a singular cycle. We write
  \[ 
     \alpha_\pi := [\sigma_\pi] \in H_n(M_\pi;\Z). 
  \]
\end{rem}

Every singular class that is represented by a single
singular simplex is a push-forward of one of these model classes:

\begin{lem}\label{lem:cyclematching}
  Let $X$ be a topological space, let $n
  \in \N_{>0}$, and let $\sigma \colon \Delta^n \longrightarrow X$ be a
  singular simplex.
  \begin{enumerate}
    \item If $R$ is a ring with unit and $\partial_n \sigma = 0$
      in~$C_*(X;R)$, then $n$ is odd.
    \item If $\partial_n \sigma =0$ in~$C_*(X;\Z)$, then 
      $n$ is odd and there exists an
      \mbox{$n$-matching}~$\pi$ and a continuous map~$f \colon
      M_\pi \longrightarrow X$ such that
      \[ H_n(f;\Z)(\alpha_\pi) = [\sigma] \in H_n(X;\Z). 
      \]
  \end{enumerate}
\end{lem}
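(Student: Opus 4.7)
The plan is to read off both claims directly from the combinatorics of the signed face sum
\[
\partial_n \sigma = \sum_{j=0}^n (-1)^j \cdot \sigma \circ i_j \in C_{n-1}(X;R).
\]

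For~(1), I would apply the $R$-linear ``sum of coefficients'' map $\Sigma \colon C_{n-1}(X;R) \to R$ that sends every basis simplex to~$1$. Since each $\sigma \circ i_j$ is itself a single basis element, $\Sigma(\partial_n \sigma) = \sum_{j=0}^n (-1)^j$, which equals $0$ in~$R$ if $n$ is odd and $1$ in~$R$ if $n$ is even; hence the hypothesis $\partial_n \sigma = 0$ in a nonzero unital ring forces $n$ to be odd.

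For~(2), the same vanishing in $C_{n-1}(X;\Z)$ means that when $\partial_n \sigma$ is written in reduced form, every coefficient is zero in~$\Z$. Grouping the face maps by their image, for each singular $(n-1)$-simplex~$\tau$ appearing as a face of~$\sigma$ the index set
\[
   S_\tau := \bigl\{ j \in \{0,\dots,n\} \bigm| \sigma \circ i_j = \tau \bigr\}
\]
must satisfy $\sum_{j \in S_\tau}(-1)^j = 0$ in~$\Z$, i.e., contain equally many even and odd indices. Choosing, for every such~$\tau$, a bijection of the even part of~$S_\tau$ with its odd part and assembling these yields a bijection
\[
  \pi \colon \{0,2,\dots,n-1\} \longrightarrow \{1,3,\dots,n\}
\]
with $\sigma \circ i_j = \sigma \circ i_{\pi(j)}$ for every even~$j$; this is the desired $n$-matching.

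With $\pi$ in hand, the required map $f \colon M_\pi \to X$ arises from the universal property of the quotient: the generators of the relation~$\sim_\pi$ are precisely the identifications that $\sigma$ already respects, so $\sigma$ descends to a continuous~$f$ with $f \circ \sigma_\pi = \sigma$. The homological conclusion then follows at once from functoriality,
\[
  H_n(f;\Z)(\alpha_\pi) = H_n(f;\Z)\bigl[\sigma_\pi\bigr] = \bigl[f \circ \sigma_\pi\bigr] = [\sigma] \in H_n(X;\Z).
\]
I do not expect any genuinely hard step here; the only point needing care is the bookkeeping in the construction of~$\pi$, which is nothing more than choosing bijections between finite sets of equal cardinality.
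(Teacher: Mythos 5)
Your proof is correct and follows essentially the same route as the paper: reduce $\partial_n\sigma$ over the basis of singular $(n-1)$-simplices, extract parity from the coefficient sum, build the matching $\pi$ from cancellation over $\Z$, and descend $\sigma$ through the quotient $\Delta^n \to M_\pi$. The only small difference is that you phrase the parity step for part (1) cleanly via the ``sum of coefficients'' homomorphism $\Sigma \colon C_{n-1}(X;R)\to R$, which makes the argument slightly more transparent than the paper's terser ``this sum has an even number of summands,'' but it is the same underlying observation.
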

\begin{proof}
  Let $R$ be a ring with unit and let 
  $0 = \partial_n \sigma = \sum_{j = 0}^n (-1)^j \cdot
  \sigma \circ i_j$. Because $\map(\Delta^{n-1}, X)$ is an $R$-basis
  of~$C_{n-1}(X;R)$, this sum has an even number of summands; in
  particular, $n$ is odd. 

  We now consider the case~$R = \Z$.  In order for these terms to cancel
  there has to be a bijection~$\pi \colon \{0,2,\dots,n-1\}
  \longrightarrow \{1,3,\dots,n\}$ such that
  \[ \sigma \circ i_j = \sigma \circ i_{\pi(j)}  
  \]
  holds for all~$j \in \{0,2,\dots,n-1\}$. Hence, $\sigma$ factors
  over the projection map~$\sigma_\pi \colon \Delta^n
  \longrightarrow\Delta^n/\sim_\pi = M_\pi$, i.e., there is a
  continuous map~$f \colon M_\pi \longrightarrow X$ with~$f \circ
  \sigma_\pi = \sigma$. By construction, $H_n(f;\Z)(\alpha_\pi) =
        [\sigma]$.
\end{proof}

\begin{rem}[cellular structure on model spaces]\label{rem:cw}
  Let $n \in \N$ be odd and let $\pi$ be an $n$-matching. 
  The face lattice of~$\Delta^n$ induces a CW-structure on the
  quotient~$M_\pi = \Delta^n / \sim_\pi$. 
\end{rem}

\begin{rem}[trivial extension of matchings]
  \label{rem:susp}
  Let $n \in \N$ be odd, let $\pi$ be an $n$-matching, and let 
  $\pi'$ be the $(n+2)$-matching with
  \[ \pi'|_{\{0,2,\dots,n-1\}} := \pi
     \qquad
     \text{and}
     \qquad
     \pi'(n+1) := n+2.
  \]
  Then $M_{\pi'} \cong_{\Top} \Sigma^2 M_{\pi}$, where $\Sigma$ denotes 
  the unreduced suspension. Indeed, this can be seen as follows: Let
  \begin{align*}
    \varphi \colon \Delta^n \times [-1,1]
    & \longrightarrow \Delta^{n+2}\\
    (x,t) & \longmapsto 
    \begin{cases}
      (1-t) \cdot x + t \cdot (\frac14 \cdot e_{n+1} 
                            + \frac34 \cdot e_{n+2})
     & \text{if $t \geq 0$}\\
      (1-|t|) \cdot x + |t| \cdot (\frac34 \cdot e_{n+1} 
                            + \frac14 \cdot e_{n+2})                              & \text{if $t < 0$}.
    \end{cases}
  \end{align*}
  Then it is not hard to show that the map
  \begin{align*}
    \Phi \colon \Delta^n \times [-1,1]^2 & \longrightarrow 
    M_{\pi'} = \Delta^{n+2} / \sim_{\pi'}
    \\
    (x,t,s) & \longmapsto 
    \begin{cases}
      (1-s) \cdot \varphi (x,t) + s \cdot e_{n+2} 
      & \text{if $t \geq 0$, $s\geq 0$}\\
      (1-|s|) \cdot \varphi (x,t) + |s| \cdot e_{n+1} 
      & \text{if $t \geq 0$, $s < 0$}\\
      (1-s) \cdot \varphi (x,t) + s \cdot e_{n+1} 
      & \text{if $t < 0$, $s\geq 0$}\\
      (1-|s|) \cdot \varphi (x,t) + |s| \cdot e_{n+2} 
      & \text{if $t < 0$, $s < 0$}
    \end{cases}
  \end{align*}
  induces a well-defined homeomorphism~$\Sigma^2 M_\pi \cong_{\Top} M_{\pi'}$ 
  (Figure~\ref{fig:suspension}).

  There is only one $1$-matching, and clearly the
  corresponding model space is homeomorphic to~$S^1$. Hence, by induction, 
  for all odd~$n \in \N$ the matching~$\pi$ given by
  \[ 0 \mapsto 1,\quad 2 \mapsto 3,\quad \dots,\quad n-1 \mapsto n
  \]
  leads to the model space~$M_\pi \cong_{\Top} S^n$. 

  However, not all model spaces are homotopy spheres 
  (Proposition~\ref{prop:modelmfd}).
\end{rem}

\begin{figure}
  \edef\ez{(0,0)}
  \edef\ezz{(1,-1)}
  \edef\eo{(2,0)}
  \edef\et{(1,1)}
  \def\gvertex#1{%
    \fill #1 circle (0.066);}
  \def\tetrahedron{%
      \draw \ez -- \ezz -- \eo;
      \draw[black!50] \ez -- \eo;
      \draw \eo -- \et -- \ez;
      \draw \et -- \ezz;
      \gvertex{\ez}
      \gvertex{\ezz}
      \gvertex{\eo}
      \gvertex{\et}
      \draw \ez node[anchor=east] {$e_0$};
      \draw \ezz node[anchor=east] {$e_n$};
      \draw (0.5,-0.5) node[anchor=east] {$\Delta^n$};
      \draw \eo node[anchor=west] {$e_{n+1}$};
      \draw \et node[anchor=west] {$e_{n+2}$};%
  }
  \def\ea{($0.25*(2,0)+0.75*(1,1)$)}
  \def\eb{($0.75*(2,0)+0.25*(1,1)$)}
  \begin{center}
    \makebox[0pt]{%
    \begin{tikzpicture}[x=1.5cm,y=1.5cm]
      \begin{scope}[opacity=0.5]
        \fill[blue!50] \ez -- \ezz -- \ea;
        \fill[red!50] \ez -- \ezz -- \eb;
      \end{scope}
      \draw[very thick, blue!50!red] \ez -- \ezz;
      \draw[blue!50!red] (0,-0.5) node[anchor=east] 
           {$\varphi(\Delta^n \times \{0\})$};
      \draw[blue] (0,0.5) node[anchor=east] 
           {$\varphi(\Delta^n \times [0,1])$};
      \draw[red] (1.5,-0.5) node[anchor=west] 
           {$\varphi(\Delta^n \times [-1,0])$};
      \tetrahedron
      \begin{scope}[shift={(4,0)}]
        \begin{scope}[opacity=0.5]
          \fill[blue!50] \ezz -- \eo -- \eb -- cycle;
          \fill[blue!50!red] \ezz -- \eb -- \ez;
          \fill[red!50] \ezz -- \eb -- \ea -- cycle;
          \fill[blue!50!red] \ezz -- \ea -- \ez -- cycle;
          \fill[blue!50] \ezz -- \ea -- \et -- \ez -- cycle;
        \end{scope}
        \draw[blue] (1.7,-0.5) node[anchor=west] 
             {$\Phi(\Delta^n \times [-1,1] \times [0,1])$};
        \draw[red] (1.7,0.5) node[anchor=west] 
             {$\Phi(\Delta^n \times [-1,1] \times [-1,0])$};
        \tetrahedron
      \end{scope}
    \end{tikzpicture}}
  \end{center}
  \caption{%
    The images of~$\varphi$ and~$\Phi$ from
    Remark~\ref{rem:susp}. We draw the images in~$\Delta^{n+2}$
    instead of in~$M_{\pi'} = \Delta^{n+2}/\sim_{\pi'}$.}
  \label{fig:suspension}
\end{figure}

\subsection{Fundamental group of model spaces}

We now show that these model spaces are simply connected:

\begin{prop}\label{prop:modelpi1}
  Let $n \in \N_{\geq 3}$ be odd, and let $\pi$ be an $n$-matching. Then 
  $M_\pi$ is simply connected.
\end{prop}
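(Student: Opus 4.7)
The plan is to use the CW structure on $M_\pi$ induced by the face lattice of $\Delta^n$ (Remark~\ref{rem:cw}) and to reduce to the 2-skeleton. Since $M_\pi$ has a single $n$-cell and $n \geq 3$, cells of dimension $\geq 3$ do not affect $\pi_1$, so it suffices to show $\pi_1(M_\pi^{(2)}) = 1$.

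Setting $v_0 := \sigma_\pi(e_0)$, I would work in the fundamental groupoid of $M_\pi^{(2)}$ and let $\gamma_k$ denote the image under $\sigma_\pi$ of the edge $[e_0, e_k]$ of $\Delta^n$, viewed as a path from $v_0$ to $\sigma_\pi(e_k)$ (with $\gamma_0$ the constant path). For each triple $0 < i < j \leq n$, the 2-face $\{e_0, e_i, e_j\}$ of $\Delta^n$ contributes a 2-cell whose attaching loop yields the triangle relation $\sigma_\pi([e_i, e_j]) = \gamma_i^{-1} \gamma_j$ in the groupoid. Once we know that $\gamma_a$ depends only on the $\sim_\pi$-class of $e_a$, every loop at $v_0$ in $M_\pi^{(1)}$ telescopes to the identity, establishing $\pi_1(M_\pi^{(2)}) = 1$.

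The task therefore reduces to showing that every \emph{self-loop} $\sigma_\pi([e_a, e_b])$ with $e_a \sim_\pi e_b$ is null-homotopic in $M_\pi^{(2)}$. Suppose $e_a \sim_\pi e_b$ arises from a basic identification via a pair $(j, \pi(j))$, so that $e_a = i_j(e_k)$ and $e_b = i_{\pi(j)}(e_k)$ for some $k$. If $(j, \pi(j)) \neq (0, n)$, one can choose an index $l \in \{0, \ldots, n-1\}$ outside the interval between $j$ and $\pi(j)$; then $c := i_j(e_l) = i_{\pi(j)}(e_l)$ is a single vertex of $\Delta^n$. The defining relation $i_j([e_k, e_l]) \sim_\pi i_{\pi(j)}([e_k, e_l])$ makes $[e_a, e_c]$ and $[e_b, e_c]$ the same 1-cell of $M_\pi$, so applying the triangle relation to $\{e_a, e_b, e_c\}$ collapses it to $\sigma_\pi([e_a, e_b]) = 1$ in $\pi_1$.

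The main obstacle is the exceptional pair $(0, n)$: if it occurs in $\pi$, no stationary $l$ is available for the basic identifications it contributes. Here one uses that $n \geq 3$ forces $\pi$ to contain a second pair $(j', \pi(j')) \neq (0, n)$, which by the preceding argument trivialises at least one consecutive self-loop. The defining identification from the pair $(0, n)$ applied to 1-faces gives $[e_{p+1}, e_{q+1}] \sim_\pi [e_p, e_q]$ for $0 \leq p < q < n$; in particular all edges $[e_k, e_{k+1}]$ (for $k = 0, \ldots, n-1$) are identified into a single 1-cell of $M_\pi$, which is therefore null-homotopic. A triangle-relation induction on $|b - a|$ then propagates null-homotopy to every self-loop in the class, completing the argument.
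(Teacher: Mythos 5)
Your proof is correct and follows essentially the same path as the paper's: reduce to the $2$-skeleton, use the triangle relations coming from the $2$-cells together with the identification of $1$-cells induced by the face gluings, and treat the case $\pi(0)=n$ separately by invoking a second matched pair. The paper packages the same computation through an explicit presentation with a spanning tree and always uses $e_0$ or $e_n$ as the third vertex of the relevant triangle, whereas your fundamental-groupoid formulation with a flexibly chosen stationary vertex $c$ is a slightly more symmetric rendering of the same argument.
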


\begin{proof}
  Clearly, $M_\pi$ is connected. We now use the cellular structure
  from Remark~\ref{rem:cw} to compute the fundamental group of~$M_\pi$
  (with respect to the basepoint given by the $0$-vertex
  of~$\Delta^n$):

  As first step, we describe the \emph{$0$-skeleton} of~$M_\pi$: If $J, K \in
  \{0,\dots, n\}$ satisfying~$J < K$ are matched through~$\pi$, then the
  $(n-1)$-faces 
  \begin{align*}
    &[0,\dots, J-1,J+1, \dots , K-1, K, K+1, \dots, n] 
    \quad \text{and} \\
    &[0,\dots, J-1, J, \dots, K-2, K -1, K+1 \dots, n] 
  \end{align*}
  of~$\Delta^n$ are glued and so give the
  same $(n-1)$-cell in~$M_\pi = \Delta^n/\sim_\pi$. In particular, the
  vertices of~$\Delta^n$ numbered by~$J, J+1, \dots, K$ all project to
  the same $0$-cell of~$M_\pi$. Let $\sim$ denote the equivalence
  relation on~$\{0,\dots,n\}$ generated by
  \[ j \sim k 
     \Longleftrightarrow
     \exists_{J,K \in \{0,\dots, n\}} (J \leq j \leq k \leq K) \land 
                                \bigl( \pi(J) = K \lor \pi(K)=J \bigr).
  \]
  Then one can check easily for~$j, k \in \{0,\dots, n\}$ that the $j$-th
  and the $k$-vertex of~$\Delta^n$ lead to the same $0$-cell
  in~$M_\pi$ if and only if~$j \sim k$. 

  As next step, we describe a spanning tree for the \emph{$1$-skeleton} 
  of~$M_\pi$: Using the above description of the $0$-skeleton, it is
  not hard to see that the $1$-cells of~$M_\pi$ corresponding to the
  edges
  \[ T := 
      \bigl\{
      [j, j+1]
      \bigm| \text{$j \in \{0,\dots,n\}$ is maximal in its $\sim$-equivalence 
             class}
      \bigr\}
  \]
  of~$\Delta^n$ is the set of edges of a spanning tree for the
  $1$-skeleton of~$M_\pi$.

  The \emph{$2$-skeleton} of~$M_\pi$ now gives us the presentation 
  $\pi_1(M_\pi) 
     \cong \langle S \mid R \rangle$ 
  of the fundamental group, where 
  \begin{align*}  
    S := & \;\bigl\{ s_{[j,k]} \bigm| j,k \in \{0,\dots,n\},\ j < k\bigr\}\\
    R := & \;\{s_e \mid e \in T\}\\
         & \;\cup \{ s_{[k,\ell]} \cdot s_{[j,k]} = s_{[j,\ell]}
                \mid j,k,\ell \in \{0,\dots, n\},\ j < k < \ell\}\\
         & \;\cup \{ s_{[j,k]} = s_{[j',k']} 
                \mid j,k,j',k' \in \{0,\dots,n\},\ j<k, j'<k'
                   ,\ \sigma_\pi \circ [j,k] = \sigma_\pi \circ [j',k']
                \}.
  \end{align*}
  Using the first and second type of relations, we immediately see
  that $\pi_1(M_\pi)$ is generated by~$\{ s_{[j,j+1]} \mid j \in
  \{0,\dots, n-1\},\ [j,j+1] \not\in T \}$. So it suffices to show 
  that these elements are trivial in~$\pi_1(M_\pi)$. 
  
  Let $j \in \{0,\dots, n-1\}$ with $[j,j+1] \not\in T$. By construction 
  of~$T$, this means that there exist~$J,K \in \{0,\dots, n\}$ with
  \[ J \leq j < j+1 \leq K 
  \]
  and the property that $J$ and $K$ are matched through~$\pi$. 
  We now consider the following cases:
  \begin{itemize}
  \item Let $K \neq n$.  
    Because $J$ and $K$ are matched through~$\pi$, we have
    \[ \sigma_\pi \circ [j,n] = \sigma_\pi \circ [j+1, n],
    \]
    and so~$s_{[j,n]} = s_{[j+1,n]}$ in~$\pi_1(M_\pi)$. Then, 
    the $2$-face~$[j, j+1, n]$ of~$\Delta^n$ shows that
    \[ s_{[j,j+1]} = s^{-1}_{[j+1,n]} \cdot s_{[j,n]} = 1 
    \]
    in~$\pi_1(M_\pi)$.
  \item The case of $J \neq 0$ can be treated in the same way as the 
    previous case.
  \item Let $J = 0$ and $K = n$. Because $n \geq 3$ there is 
    at least one other matching between some~$J', K' \in \{0,\dots, n\}$ 
    with~$J' < K'$ through~$\pi$. The same argument as in the 
    first case yields~$s_{[J',J'+1]} = 1$ in~$\pi_1(M_\pi)$. On the 
    other hand, $\pi(0) = n$ gives 
    \[ s_{[0,1]} = s_{[1,2]} = \dots = s_{[n-1,n]}
    \]
    in~$\pi_1(M_\pi)$. In particular, $s_{[j,j+1]} = s_{[J',J'+1]} =
    1$ in~$\pi_1(M_\pi)$.
  \end{itemize}
  Hence, $\pi_1(M_\pi)$ is the trivial group.
\end{proof}

\subsection{How close are the model spaces to being manifolds?}

These model spaces are (after sufficiently many barycentric
subdivisions) $n$-dimensional pseudo-manifolds. However, in general,
they do not have the homotopy type of oriented closed connected
manifolds:

\begin{prop}\label{prop:modelmfd}
  \hfil
  \begin{enumerate}
  \item
    Let $n \in \{1,3\}$, and let $\pi$ be an $n$-matching. Then $M_\pi$ is 
    homotopy equivalent to~$S^n$.
  \item
    For every odd~$n \in \N_{\geq 5}$ there exists an
    $n$-matching~$\pi$ such that the model space~$M_\pi$ does
    \emph{not} have the homotopy type of an oriented closed connected
    $n$-manifold. In particular, $M_\pi \not\simeq S^n$.
  \end{enumerate}
\end{prop}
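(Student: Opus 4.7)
The case $n=1$ is immediate: the unique $1$-matching identifies the two endpoints of~$\Delta^1$, yielding $M_\pi \cong S^1$. For $n=3$, I would handle the two essentially different matchings in turn. The standard matching $0\mapsto 1,\, 2\mapsto 3$ is a trivial extension of the unique $1$-matching, so Remark~\ref{rem:susp} gives $M_\pi \cong \Sigma^2 S^1 \cong S^3$ directly. For the remaining matching $0\mapsto 3,\, 2\mapsto 1$, I would use the CW structure of Remark~\ref{rem:cw} to compute $H_*(M_\pi;\Z)$: working out the equivalence classes of vertices, edges, and $2$-faces of~$\Delta^3$ under~$\sim_\pi$, writing down the three cellular boundary maps from the gluing data, and checking that the result matches~$H_*(S^3;\Z)$. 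Combined with simple connectedness (Proposition~\ref{prop:modelpi1}), an application of the Hurewicz and Whitehead theorems then yields $M_\pi \simeq S^3$.

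For part~(2), the strategy is to treat $n=5$ by hand and then bootstrap to all higher odd~$n$ via suspension. For the base case, I would exhibit an explicit $5$-matching~$\pi_0$ designed to produce substantial identification of low-dimensional faces—for instance, $0\mapsto 5,\, 2\mapsto 3,\, 4\mapsto 1$—and compute $H_*(M_{\pi_0};\Q)$ from the induced CW chain complex of Remark~\ref{rem:cw}. Since $M_{\pi_0}$ is simply connected (Proposition~\ref{prop:modelpi1}) and~$H_5(M_{\pi_0};\Z)$ contains~$\alpha_{\pi_0}$, the aim is to verify that $H_4(M_{\pi_0};\Q) \neq 0$ (or, alternatively, that $\dim_\Q H_2$ differs from $\dim_\Q H_3$). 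Any such discrepancy is incompatible with Poincar\'e duality for a simply connected closed oriented $5$-manifold, so $M_{\pi_0}$ cannot be homotopy equivalent to one.

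For odd $n \geq 7$, I would define $\pi$ as the iterated trivial extension of~$\pi_0$ from Remark~\ref{rem:susp}, so that $M_\pi \cong \Sigma^{n-5} M_{\pi_0}$. Iterated unreduced suspension shifts reduced homology by the corresponding degree, so the Poincar\'e-duality failure of~$M_{\pi_0}$ propagates to~$M_\pi$: if $b_4(M_{\pi_0};\Q) \neq 0$, then $b_{n-1}(M_\pi;\Q) \neq 0$, while $b_1(M_\pi;\Q) = 0$ because a double suspension is simply connected. This contradicts Poincar\'e duality for any hypothetical closed oriented $n$-manifold equivalent to~$M_\pi$, so no such manifold exists, and in particular $M_\pi \not\simeq S^n$.

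The hard part will be the $n=5$ chain-complex computation: one must correctly identify the equivalence classes of $k$-faces of~$\Delta^5$ under~$\sim_{\pi_0}$ for all~$k$ (the vertex and top-cell behaviours are transparent, but the $1$-, $2$-, and $3$-face identifications propagate less obviously) and then determine the ranks of the four boundary maps accurately enough to detect the Poincar\'e-duality defect. If the first candidate matching happens to yield~$S^5$, one tries another; among the $3!=6$ distinct $5$-matchings, the structural mismatch between the single top cell and the at most three $(n-1)$-cells should force at least one non-sphere example.
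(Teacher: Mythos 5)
Your overall strategy is exactly the paper's: part~(1) by splitting the two $3$-matchings into the trivial extension (handled by Remark~\ref{rem:susp}) and $0\mapsto 3,\ 2\mapsto 1$ (handled by the cellular homology computation plus Proposition~\ref{prop:modelpi1}, Hurewicz, and Whitehead), and part~(2) by exhibiting a single $5$-matching whose model space has homology incompatible with Poincar\'e duality, then bootstrapping to higher odd dimensions by the double-suspension structure of trivial extensions. Your suspension step is correct: $\tilde H_{n-1}(\Sigma^{n-5}M_{\pi_0};\Q)\cong \tilde H_4(M_{\pi_0};\Q)$, while $H_1=0$ by simple connectedness of a double suspension, so a nonzero $b_4$ of~$M_{\pi_0}$ does propagate to a Poincar\'e-duality defect in every higher odd dimension.

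The genuine gap is that the base case~$n=5$ is never actually established. You propose the matching $0\mapsto 5,\ 2\mapsto 3,\ 4\mapsto 1$, explicitly flag the cellular chain complex computation as ``the hard part,'' and then fall back on the heuristic that the mismatch between one top cell and at most three $(n-1)$-cells ``should force at least one non-sphere example.'' That heuristic is not a proof, and it demonstrably fails in dimension~$3$, where every matching gives a homotopy sphere; so nothing short of an explicit computation settles dimension~$5$. The paper carries this out for the matching $0\mapsto 3,\ 2\mapsto 5,\ 4\mapsto 1$ and finds $H_1=H_2=H_3=0$ and $H_4\cong\Z$, which is exactly what your suspension argument needs as input (in particular the discrepancy is already visible rationally). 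Also note that your fallback criterion ``$\dim_\Q H_2\neq\dim_\Q H_3$'' would not feed directly into the suspension argument as you wrote it, which is phrased only in terms of $b_4\neq 0$; if you ended up in that alternative you would need to adjust the degrees accordingly. Until the $n=5$ homology is computed for a concrete matching, part~(2) is a plan rather than a proof.
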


\begin{proof}
  \emph{Ad~1.} The matchings~$0 \mapsto 1$ and $0\mapsto1, 2 \mapsto
  3$ lead to~$S^1$ and $S^3$, respectively (Remark~\ref{rem:susp}). It
  remains to consider the matching~$\pi$ given by 
  \begin{align*} 
    0 & \longmapsto 3\\ 
    2 & \longmapsto 1. 
  \end{align*}
  Recall that any simply connected integral homology sphere is a
  homotopy sphere by the Hurewicz and Whitehead theorems. By
  Proposition~\ref{prop:modelpi1} it is thus sufficient to
  compute~$H_*(M_\pi;\Z)$ using the cellular structure from
  Remark~\ref{rem:cw}: A straightforward calculation shows that: 
  \begin{itemize}
    \item 
      By Proposition~\ref{prop:modelpi1}, we have $H_0(M_\pi;\Z) \cong
      \Z$ and $H_1(M_\pi;\Z) \cong 0$.
    \item The cellular chain complex of~$M_\pi$ in degree~$2$ has the $2$-cells 
      corresponding to the $2$-faces~$[0,1,2]$ and $[0,1,3]$ as a basis; in 
      degree~$1$, a basis is given by the $1$-faces~$[0,1], [0,3]$, and 
      with respect to these bases, the boundary map is represented by the 
      matrix
      \[ \begin{pmatrix}
           1 & 2\\
           0 & -1 \\
         \end{pmatrix}
      \]
      Thus, there are no cellular cycles in degree~$2$ and
      so~$H_2(M_\pi;\Z) \cong 0$.
    \item Because $\pi$ is a matching, the unique $3$-cell of~$M_\pi$
      is a cycle; as $M_\pi$ has no cells in dimension at least~$4$,
      it follows that $H_3(M_\pi;\Z) \cong \Z$ and $H_j(M_\pi;\Z)
      \cong 0$ for all~$j \in \N_{\geq 4}$.
  \end{itemize}
  Therefore, $H_*(M_\pi;\Z) \cong H_*(S^3;\Z)$.

  \emph{Ad~2.} We first prove the statement in dimension~$5$: To this end, 
  we consider the matching~$\pi$ given by
  \begin{align*}
    0 & \longmapsto 3\\
    2 & \longmapsto 5\\
    4 & \longmapsto 1.
  \end{align*}
  Using the cellular structure from Remark~\ref{rem:cw} one can
  calculate that
  \begin{align*}
    H_j(M_\pi;\Z) 
    \cong
    \begin{cases}
      \Z & \text{if $j = 0$} \\
      0  & \text{if $j=1$}\\
      0  & \text{if $j=2$}\\
      0  & \text{if $j=3$}\\
      \Z & \text{if $j = 4$, generated by~``$[1,2,3,4,5] - [0,1,3,4,5]$''}\\
      \Z & \text{if $j=5$, generated by~``$[0,\dots,5]$''}\\
      0  & \text{if $j \geq 6$}
    \end{cases}
  \end{align*}
  for all~$j \in \N$. 
  Hence, $H_*(M_\pi;\Z)$ is not compatible with
  Poincar\'e duality, and so $M_\pi$ cannot have the homotopy type of
  an oriented closed connected $5$-manifold.

  Let $k \in \N$.  Taking successive trivial extensions of this
  matching~$\pi$ as in Remark~\ref{rem:susp} leads to model spaces
  homeomorphic to~$\Sigma^{2 \cdot k} M_\pi$ of dimension~$5 + 2 \cdot
  k$. Because
  \[ H_j (\Sigma^{2 \cdot k} M_\pi;\Z) 
         \cong 
         \begin{cases}
           0 & \text{if $j \in \{1,\dots, 3 + 2 \cdot k\}$}\\
           \Z & \text{if~$j \in \{0, 4 + 2\cdot k, 5 +2 \cdot k\}$}
         \end{cases}
  \]
  is not compatible with Poincar\'e duality, these model spaces
  do not have the homotopy type of an oriented closed connected $(5 +
  2\cdot k)$-manifold.
\end{proof}

\section{Manifolds of integral simplicial volume equal to~$1$} 
\label{sec:1}

In this section, we will prove Theorem~\ref{thm:odd1} and its
generalisation to multiples of the fundamental class
(Theorem~\ref{thm:odd1mult}).

\subsection{Small fundamental class}

We follow the strategy outlined in the introduction. In particular, we
will use the model spaces introduced in Section~\ref{sec:model} and
the following refinement of the classical Betti number
bound~\cite[Example~14.28]{lueck}:

\begin{lem}\label{lem:oddhomology}
  Let $n \in \N$ be odd and let $M$ be an oriented closed connected
  \mbox{$n$-mani}\-fold.
  \begin{enumerate}
    \item Let $R$ be a ring with unit, let $m \in \Z$ be invertible 
      in~$R$, and suppose that $\ilone{m \cdot \fcl M} =1$. Then  
      for all~$k \in \{1,\dots, n-1\}$ we have
      \[ H_k(M;R) \cong 0. 
      \]
    \item
      In particular: If~$\isv M = 1$, then for all~$k \in
      \{1,\dots,n-1\}$ we have
      \[  H_k(M;\Z) \cong 0.
      \]
  \end{enumerate}
\end{lem}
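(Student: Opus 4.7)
The plan is to realize $m \cdot \fclr M$ by a single singular simplex and then feed this into Poincar\'e duality via the chain-level cap-product formula. Since $\ilone{m \cdot \fcl M} = 1$, one may write $m \cdot \fclr M = \epsilon \cdot [\sigma]$ for some $\epsilon \in \{\pm 1\}$ and singular simplex $\sigma\colon \Delta^n \to M$. Given $\alpha \in H_k(M;R)$, I would apply Poincar\'e duality to obtain $\beta \in H^{n-k}(M;R)$ with $\alpha = \beta \frown \fclr M$, fix a cocycle representative $\varphi$ of $\beta$, and use the explicit cap-product formula $\varphi \frown \sigma = \varphi(\fface{n-k}{\sigma}) \cdot \bface{k}{\sigma}$ to write
\[
  m \cdot \alpha \;=\; \epsilon \cdot [\, r \cdot \tau \,],
  \qquad r := \varphi(\fface{n-k}{\sigma}) \in R,
  \quad \tau := \bface{k}{\sigma}.
\]
In particular $r \cdot \tau$ is a cycle, so $r \cdot \partial\tau = 0$ in the free $R$-module $C_{k-1}(M;R)$.

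For \emph{even} $k \in \{2,\dots,n-1\}$ I would regroup the $(k+1)$ summands of $\partial\tau = \sum_{j=0}^k (-1)^j \cdot \tau \circ i_j$ by coincident singular $(k-1)$-simplices, writing $\partial\tau = \sum_\ell c_\ell \cdot \rho_\ell$ with pairwise distinct $\rho_\ell$ and integer multiplicities satisfying $\sum_\ell c_\ell = \sum_{j=0}^k (-1)^j = 1$. Freeness of $C_{k-1}(M;R)$ over $R$ then converts $r \cdot \partial\tau = 0$ into $r \cdot c_\ell = 0$ for every $\ell$, and summing yields $r = r \cdot \sum_\ell c_\ell = 0$. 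Consequently $m \cdot \alpha = 0$, and $\alpha = 0$ because $m$ is invertible in $R$. This is the analogue, for boundaries rather than cycles, of the parity argument already used in Lemma~\ref{lem:cyclematching}(1).

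For \emph{odd} $k \in \{1,\dots,n-2\}$ the same regrouping collapses to $\sum_\ell c_\ell = 0$, which no longer forces $r = 0$. To close this case I would combine the even-degree vanishing with Poincar\'e duality: $H_k(M;R) \cong H^{n-k}(M;R)$ with $n-k$ even in $\{2,\dots,n-1\}$, and the even case already delivers $H_{n-k}(M;R) = 0$. Over a field $R$ the universal coefficient theorem then identifies $H^{n-k}(M;R)$ with $\Hom_R(H_{n-k}(M;R),R) = 0$. For the integral statement in part~(2), I would deduce it from part~(1) applied to $R = \Q$ and to $R = \mathbb{F}_p$ for every prime $p$, and recombine via the integral universal coefficient theorem to conclude $H_k(M;\Z) = 0$ for all $k \in \{1,\dots,n-1\}$.

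The main obstacle is precisely the odd-$k$ case: chain-level single-simplex rigidity (in the spirit of Lemma~\ref{lem:cyclematching}(1)) is inherently an odd-dimensional phenomenon and does not squeeze $r$ down to zero when $k$ is itself odd. The remedy is to route the odd degrees through Poincar\'e duality to an even-degree statement that the first half of the argument has already settled, paying the mild price of a universal coefficients step whenever the coefficient ring is not a field.
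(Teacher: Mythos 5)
Your even-$k$ argument is correct and pleasingly explicit: regrouping $\partial\tau$ by distinct $(k-1)$-simplices and observing that the total signed multiplicity is $\sum_{j=0}^{k}(-1)^{j}=1$ (since $k$ is even) forces $r=0$ over \emph{any} unital ring. This makes precise what the paper states rather tersely and requires no hypothesis on zero divisors in~$R$.

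The odd-$k$ case, however, leaves a gap against the statement of part~(1). You route $H_k(M;R)$ through Poincar\'e duality to $H^{n-k}(M;R)$ (with $n-k$ even) and then appeal to universal coefficients to identify $H^{n-k}(M;R)$ with $\Hom_R(H_{n-k}(M;R),R)=0$; but that identification holds over a field (or, with an $\operatorname{Ext}$-correction, over a PID --- which would in turn require the not-yet-established vanishing of $H_{n-k-1}(M;R)$ in the \emph{odd} degree $n-k-1$), whereas part~(1) is asserted for an arbitrary unital ring in which $m$ is invertible. Your own parenthetical ``over a field $R$'' concedes exactly this. Part~(2) does go through --- passing to $\Q$ and all $\mathbb{F}_p$ and recombining via the integral universal coefficient theorem is fine --- but part~(1) in its stated generality is not proved. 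The paper instead closes the odd-$k$ case at the chain level, with no detour through the complementary degree: taking the $n$-matching $\pi$ associated to $\sigma$ by Lemma~\ref{lem:cyclematching}, one computes that the boundary of the front $(k+1)$-face of $\sigma$ equals $\pm\,\fface{k}{\sigma}$ in $C_k(M;R)$, so the $k$-face appearing in the cap-product formula is already a boundary and $\varphi_k$ vanishes over any~$R$ (equivalently, as the paper's footnote notes, the same boundary argument applied in the odd degree $n-k$ shows that the complementary $(n-k)$-face is a boundary, so every $(n-k)$-cocycle vanishes on it). Substituting that chain-level computation for the universal coefficients step would close the gap and deliver part~(1) in full generality.
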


If $X$ is a space and~$\alpha \in H_*(X;\Z)$, then 
\[ \ilone \alpha :=
   \min \bigl\{ |c|_1 \bigm| \text{$c \in C_*(X;\Z)$ is a cycle
                             representing~$\alpha$}\bigr\}.
\]  
For example, $\isv M = \ilone{\fcl M}$ holds for all oriented closed connected 
manifolds~$M$.

\begin{proof}
  It suffices to prove the first part. Because of~$\ilone{m \cdot \fcl
    M} = 1$ there is a singular simplex~$\sigma \colon \Delta^n
  \longrightarrow M$ that is a singular cycle and that satisfies~$[\sigma]
  = \pm m \cdot \fcl M \in H_n(M;\Z)$; without loss of generality, we 
  may assume~$[\sigma] = m \cdot \fcl M$. Because $m$ is a unit in~$R$,
  the Poincar\'e duality homomorphism
  \begin{align*}
    \varphi_k := 
    \args \cap m \cdot \fclr M
    \colon H^{n-k}(M;R) & \longrightarrow H_k(M;R)
    \\
    [f] & \longmapsto (-1)^{(n-k) \cdot k} \cdot 
    \bigl[ f(\bface {n-k} \sigma) \cdot \fface k \sigma \bigr] 
  \end{align*}
  with $R$-coefficients is an isomorphism for all~$k \in \{0,\dots,
  n\}$.  Hence, it is enough to show that $\varphi_k$ is the zero map
  for all~$k \in \{1, \dots, n-1\}$: If $k$ is even, then the
  $k$-simplex~$\fface k \sigma$ cannot be cycle
  (Lemma~\ref{lem:cyclematching}), and so the fact that $\varphi_k$ 
  as described above is well-defined shows that~$\varphi_k =
  0$.\footnote{Alternatively, one can use the arguments
for the odd case to show that $\bface{n-k}\sigma$ is a boundary, whence 
every cocycle vanishes on~$\bface{n-k}\sigma$.}
 
  Now let $k \in \{1,\dots,n-1\}$ be odd and let $\pi$ be an
  $n$-matching for~$\sigma$ as in Lemma~\ref{lem:cyclematching}.  We
  then define $I := \bigl\{ j\in \{0,2, \dots,k+1\} \bigm| \pi(j) \leq
  k \bigr\}$ and $\overline I := \{0,\dots,k+1\} \setminus (I \cup \pi(I))$.
  Moreover, for a sequence~$(j_0, \dots, j_r)$ in~$\{0,\dots, n\}$ we write
  \begin{align*}
    [j_0, \dots, j_r]_\sigma \colon \Delta^r & \longrightarrow M
    \\
    (t_0, \dots, t_r) & \longmapsto 
    \sigma \biggl(\sum_{s=0}^r (-1)^s \cdot t_s \cdot e_{j_s}\biggr)
  \end{align*}
  for the corresponding subsimplex of~$\sigma$, where, $e_0, \dots,
  e_n$ denote the standard unit vectors of~$\R^{n+1} \supset
  \Delta^n$. For example,~$\fface k \sigma = [0,\dots,k]_\sigma$. 
  With this notation and the relation $\sigma \circ i_j = \sigma
  \circ i_{\pi(j)}$ for all~$j \in \{0,2,\dots, n-1\}$, we
  obtain
  \begin{align*}
    \partial_{k+1} [0,\dots,k+1]_\sigma \hspace{-2cm}
    & 
    \\
    & = \sum_{j=0}^{k+1} (-1)^{j} \cdot [0,\dots,\widehat j, \dots, k+1]_\sigma\\
    & = \sum_{j \in I} [0,\dots,\widehat{\pi(j)}, \dots, k+1]_\sigma
      - \sum_{j \in \pi(I)} [0,\dots,\widehat j, \dots k+1]_\sigma
      + \sum_{j \in \overline I} (-1)^j \cdot [0,\dots, k]_\sigma
      \\
    & = 0 + \sum_{j \in \overline I} (-1)^j \cdot \fface k \sigma.
  \end{align*}
  Because $k$ is odd, also $|\overline I|$ is odd; moreover, $\pi$
  matches even with odd indices, so the number of even elements and
  the number of odd elements in~$\overline I$ differ by~$1$. Therefore,
  \[ \partial_{k+1} [0,\dots,k+1]_\sigma = \pm \fface k \sigma.
  \]
  Thus, $[\fface k \sigma] = 0$ in~$H_k(M;R)$, which
  implies~$\varphi_k = 0$.
\end{proof}


\begin{proof}[Proof of Theorem~\ref{thm:odd1}]
  Let $n \in \N$ be odd, and let $M$ be an oriented closed connected
  $n$-manifold with~$M \simeq S^n$. Then $\isv M = \isv
  {S^n}$. Moreover, $\isv{S^n} = 1$ for odd~$n$, as is witnessed by
  the cycle~$\Delta^n \longrightarrow \Delta^n/\partial \Delta^n
  \cong_{\Top} S^n$ given by the canonical projection.

  Conversely, let $n \in \N_{>0}$ and $M$ be an oriented closed connected
  $n$-manifold that satisfies~$\isv M = 1$. Then $n$ is odd by
  Lemma~\ref{lem:cyclematching}. Because $S^1$ is the only oriented
  closed connected \mbox{$1$-mani}\-fold we may assume~$n \geq 3$.  We
  follow the strategy outlined in the introduction:

  The manifold~$M$ is simply connected: In view of
  Lemma~\ref{lem:cyclematching} there is an $n$-matching~$\pi$ and a
  continuous map~$f \colon M_\pi \longrightarrow$ such that
  \[ H_n(f;\Z) (\alpha_\pi) = \fcl M \in H_n(M;\Z). 
  \]
  Because $M_\pi$ is simply connected, $f$ lifts to a map~$\widetilde
  f \colon M_\pi \longrightarrow \widetilde M$ to the universal
  covering~$p \colon \widetilde M \longrightarrow M$ of~$M$. Thus,
  \[    H_n(p;\Z) \circ H_n(\widetilde f;\Z)(\alpha_\pi)
  = H_n(f;\Z)(\alpha_\pi) 
  = \fcl M \neq 0, 
  \]
  and so~$H_n(\widetilde M;\Z) \not\cong 0$. In particular,
  $\widetilde M$ is compact and $p$ is a finite covering. Transfer
  shows that $\mathrm{im}\ H_n(p;\Z) = D \cdot \Z \cdot \fcl M$, where
  $D$ is the number of sheets of~$\pi$. Therefore, $D = 1$, and so $M
  \cong_{\Top} \widetilde M$ is simply connected.
  
  We have~$H_*(M;\Z) \cong H_*(S^n;\Z)$ by
  Lemma~\ref{lem:oddhomology}.
  
  Hence, $M \simeq S^n$: Any simply connected integral homology
  $n$-sphere is homotopy equivalent to~$S^n$ by the Hurewicz and Whitehead 
  theorems.
\end{proof}

\subsection{Small multiples of the fundamental class}

More generally, this technique also gives the following
characterisation for multiples of the fundamental class:

\begin{thm}[multiples of the fundamental class of small integral $\ell^1$-norm]
  \label{thm:odd1mult}
  Let $n \in \N$ be odd and let $M$ be an oriented closed connected
  $n$-manifold. Then the following are equivalent:
  \begin{enumerate}
    \item There exists~$m \in \Z\setminus\{0\}$ with~$\ilone{m \cdot
      \fcl M} = 1$.
    \item The manifold~$M$ is dominated by~$S^n$, i.e., there is a
      map~$S^n \longrightarrow M$ of non-zero degree.
    \item There is a subsequence of~$\bigl(\ilone{m \cdot \fcl
      M}\bigr)_{m \in \N}$ that is bounded by~$1$.
  \end{enumerate}
\end{thm}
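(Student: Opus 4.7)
The plan is to establish the cycle $(3) \Rightarrow (1) \Rightarrow (2) \Rightarrow (3)$, with the main effort going into $(1) \Rightarrow (2)$; the other two implications are short.

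For $(3) \Rightarrow (1)$, note that $\fcl M$ generates $H_n(M;\Z) \cong \Z$, so $m \cdot \fcl M \neq 0$ for every $m \in \Z \setminus\{0\}$ and hence $\ilone{m\cdot \fcl M} \geq 1$; picking any positive term in the given subsequence yields equality. For $(2)\Rightarrow(3)$, let $g\colon S^n \to M$ have degree $d\neq 0$, and for each $k \in \N$ let $\phi_k\colon S^n\to S^n$ be a self-map of degree $k$. The push-forward under $g \circ \phi_k$ of the canonical single-simplex cycle witnessing $\isv{S^n}=1$ (which exists for odd $n$, as already recalled in the proof of Theorem~\ref{thm:odd1}) is a single-simplex cycle in $M$ representing $kd\cdot \fcl M$, so $\ilone{kd\cdot\fcl M} \leq 1$ for all $k$, producing the required subsequence.

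For the core implication $(1) \Rightarrow (2)$, the idea is to follow the strategy of Theorem~\ref{thm:odd1}, but on the universal covering, since $M$ need not be simply connected. Assume $\ilone{m\cdot\fcl M}=1$ with $m\neq 0$; the case $n=1$ is trivial, so suppose $n \geq 3$. Lemma~\ref{lem:cyclematching} provides an $n$-matching $\pi$ and a map $f\colon M_\pi \to M$ with $H_n(f;\Z)(\alpha_\pi)=m\cdot\fcl M$. Since $M_\pi$ is simply connected (Proposition~\ref{prop:modelpi1}), $f$ lifts to $\widetilde f \colon M_\pi \to \widetilde M$ along the universal covering $p\colon \widetilde M\to M$. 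Then $p_* \circ H_n(\widetilde f;\Z)$ is non-zero on $\alpha_\pi$, so $\widetilde M$ is compact, hence orientable with $H_n(\widetilde M;\Z)\cong\Z$; writing $D$ for the degree of $p$, one reads off $\widetilde f_*(\alpha_\pi) = (m/D)\cdot \fcl{\widetilde M}$ with $m/D\in\Z\setminus\{0\}$. Thus $\widetilde f\circ\sigma_\pi$ is a single-simplex cycle in $\widetilde M$ representing $(m/D)\cdot\fcl{\widetilde M}$, and Lemma~\ref{lem:oddhomology} applied to $\widetilde M$ with $R=\Q$ (where $m/D$ is invertible) shows that $\widetilde M$ is a simply connected rational homology $n$-sphere. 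The rational Hurewicz theorem then yields $\pi_n(\widetilde M)\otimes\Q \cong H_n(\widetilde M;\Q)\cong\Q$, from which one extracts a map $S^n\to\widetilde M$ of non-zero degree; composing with $p$ produces a map $S^n\to M$ of non-zero degree.

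The main obstacle is precisely the loss of simple connectedness of $M$: the transfer argument used in Theorem~\ref{thm:odd1} now only forces $D \mid m$ rather than $D=1$, so one cannot hope to recover a homotopy equivalence $M \simeq S^n$. The remedy is to do the homotopy-theoretic work upstairs on $\widetilde M$ and to replace the integral Hurewicz/Whitehead step by its rational Hurewicz counterpart, which still suffices because we only need a degree-$d$ map out of $S^n$, not a homotopy equivalence.
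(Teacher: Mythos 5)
Your proof follows the same route as the paper's: the short implications are handled by the same observations (the lower bound $\ilone{m\cdot\fcl M}\geq 1$ for $m\neq 0$, and push-forward of a single-simplex cycle under $g\circ\phi_k$), and the core implication $(1)\Rightarrow(2)$ lifts to the universal covering, invokes Lemma~\ref{lem:oddhomology} with $R=\Q$ to get a simply connected rational homology $n$-sphere, and then applies the rational Hurewicz theorem exactly as in the paper. The argument is correct and essentially identical in approach.
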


\begin{proof}
  The implication~``3~$\Longrightarrow$~1'' is clear.  For the
  implication~``2~$\Longrightarrow$~3'' we use self-maps~$S^n
  \longrightarrow S^n$ of arbitrarily high degree, the fact that
  $\isv{S^n} = 1$ and that 
  \[ \ilone{H_*(f;\Z)(\alpha)} \leq \ilone \alpha\] 
  holds for all continuous maps~$f \colon X \longrightarrow Y$ and
  all~$\alpha \in H_*(X;\Z)$.

  We will now prove~``1~$\Longrightarrow$~2'': Without loss of
  generality we may assume that~$n > 1$. Let $m \in \Z \setminus
  \{0\}$ with~$\ilone{m \cdot \fcl M} = 1$. We will now proceed 
  as in the proof of Theorem~\ref{thm:odd1}:
  \begin{itemize}
    \item Reduction to the simply connected case: 
      Because~$\ilone{m
        \cdot \fcl M} = 1$, Lemma~\ref{lem:cyclematching} implies that
      there is an $n$-matching~$\pi$ and a continuous map~$f \colon M_\pi
      \longrightarrow M$ satisfying
      \[ H_n(f;\Z)(\alpha_\pi) = m \cdot \fcl M.
      \]
      The model space~$M_\pi$ is simply connected
      (Proposition~\ref{prop:modelpi1}); hence, $f$ lifts to a
      map~$\widetilde f \colon M_\pi \longrightarrow \widetilde M$ to
      the universal covering~$\widetilde M$ of~$M$. Because~$m \cdot
      \fcl M \neq 0$ it follows that $H_n(\widetilde M;\Z) \not\cong
      0$, and so $\widetilde M$ is compact and $H_n(\widetilde
      f;\Z)(\alpha_\pi)$ is a non-zero multiple of~$\fcl{\widetilde
        M}$ with integral $\ell^1$-norm equal to~$1$. 
    \item We have $H_*(\widetilde M;\Q) \cong H_*(S^n;\Q)$ by
      Lemma~\ref{lem:oddhomology} and the first step.
    \item Existence of a non-zero degree map~$g \colon S^n
      \longrightarrow \widetilde M$: Combining the previous steps with
      the rational Hurewicz theorem~\cite[Theorem~8.6]{fht} implies
      that the rational Hurewicz homomorphism
      \begin{align*}
        h_n^{\widetilde M} \colon 
        \pi_n(\widetilde M) \otimes_\Z \Q & \longrightarrow
        H_n(\widetilde M;\Z) \otimes_\Z \Q \cong H_n(\widetilde M;\Q)
        \\ [g]_* & \longmapsto H_n(g;\Z)(\fcl {S^n}) \otimes 1
      \end{align*}
      is surjective, and any (pointed) continuous map~$g
      \colon S^n \longrightarrow \widetilde M$ that is mapped
      via~$h_n^{\widetilde M}$ to an element of~$H_n(\widetilde M;\Q)
      \setminus \{0\}$ has non-zero degree.
  \end{itemize}
  Composing~$g \colon S^n \longrightarrow \widetilde M$ with the 
  universal covering map~$\widetilde M \longrightarrow M$ hence shows 
  that $M$ is dominated by~$S^n$.
\end{proof}

\subsection{Integral approximations of simplicial volume}\label{subsec:intapprox}

Determining the exact relation between ordinary simplicial volume and
$L^2$-Betti numbers of oriented closed connected aspherical manifolds
is a long-standing open
problem~\cite[p.~232]{gromovasymptotic}. Estimates between Betti
numbers and simplicial volumes can be obtained in the presence of some
integrality of coefficients~\cite{gromov,mschmidt}, based on
Poincar\'e duality and a counting argument as in 
Lemma~\ref{lem:oddhomology}. Hence, we are led to the question in
which sense ordinary simplicial volume can be approximated by integral
versions of simplicial volume and how these approximations relate to
Betti numbers and $L^2$-Betti numbers.

On the one hand, one can use the action of the fundamental group to
introduce more flexibility in the coefficients. This leads to stable
integral simplicial volume (based on the integral simplicial volumes of all
finite coverings) and integral foliated simplicial volume (based on
probability spaces with an action of the fundamental
group)~\cite{ffm,loehpagliantini}. In contrast to $L^2$-Betti numbers
that can be computed by the corresponding integral
approximations~\cite{lueckapprox,gaboriau}, stable integral simplicial
volume and integral foliated simplicial volume of oriented closed 
connected hyperbolic manifolds of dimension
at least~$4$ does \emph{not} coincide with ordinary simplicial
volume~\cite{ffm,flps}.

On the other hand, ordinary simplicial volume of an oriented closed
connected manifold~$M$ coincides with~\cite[Remark~5.4]{ffsn_rep}
\[ \inf_{d \in \N_{>0}} \frac1d \cdot \bigl\|{d \cdot \fcl M}\bigr\|_{1,\Z}.
\]
Therefore, one can use the sequence~$\Sigma(M) := (\| d\cdot \fcl M
\|_{1,\Z})_{d \in \N}$ to introduce refined versions of vanishing of
simplicial volume. The strongest such vanishing occurs in case the
sequence~$\Sigma(M)$ contains a subsequence bounded by~$1$. This is
exactly the situation of Theorem~\ref{thm:odd1mult}, which gives a
complete geometric characterisation of such manifolds. More generally,
the property that $\Sigma(M)$ contains a bounded subsequence is
related to weak flexibility of homology classes~\cite[Theorem~5.1,
  Remark~5.4]{ffsn_rep}.  However, it is not clear how~$\Sigma(M)$ is
related to $L^2$-Betti numbers in the aspherical case.

\section{Manifolds of integral simplicial volume equal to~$2$} 
\label{sec:2}

In the previous section, we gave a complete classification of
manifolds with integral simplicial volume equal to~$1$. The natural
next step is to look at manifolds with integral simplicial volume
equal to~$2$. 

\begin{question}
  Which oriented closed connected manifolds~$M$ satisfy~$\isv M = 2$\,?
\end{question}

However, this problem seems to be much harder than the case of
integral simplicial volume equal to~$1$. In the following, we will
give some partial answers.

We begin with a small, but helpful, observation, generalising the parity 
aspect of Lemma~\ref{lem:cyclematching}:

\begin{lem}[parity of cycles]\label{lem:parity}
  Let $M$ be an oriented closed connected manifold of even dimension. 
  Then $\isv M$ is even. 
\end{lem}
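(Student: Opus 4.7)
The plan is to reduce the fundamental cycle modulo~$2$ and run a face-counting argument paralleling Lemma~\ref{lem:cyclematching}(1). I would fix a $\Z$-fundamental cycle $c = \sum_j a_j \cdot \sigma_j$ of~$M$ in reduced form with $|c|_1 = \isv M$, and let $\bar c \in C_n(M;\Z/2)$ denote its mod-$2$ reduction. Setting $N := |\{j : a_j \text{ odd}\}|$, the chain~$\bar c$ is then the formal sum of exactly $N$ distinct $n$-simplices.

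The first step is the elementary identity $|a| \equiv a \pmod 2$ for every $a \in \Z$, which gives
\[
    \isv M \;=\; |c|_1 \;=\; \sum_j |a_j| \;\equiv\; \sum_j a_j \;\equiv\; N \pmod 2,
\]
so the lemma reduces to proving that $N$ is even.

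To this end, I would exploit that $\bar c$ is a cycle in~$C_n(M;\Z/2)$. Expanding
\[
    0 \;=\; \partial \bar c \;=\; \sum_{j:\, a_j \text{ odd}} \;\sum_{k=0}^n \sigma_j \circ i_k
\]
as a formal sum of $(n+1)\cdot N$ singular $(n-1)$-simplices and using that $\map(\Delta^{n-1},M)$ is a $\Z/2$-basis of~$C_{n-1}(M;\Z/2)$, each distinct $(n-1)$-simplex occurring in this expansion must appear an even number of times. In particular the total count $(n+1)\cdot N$ is even; since $n$ is even, $n+1$ is odd, forcing $N$ to be even.

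There is no substantive obstacle here: the argument is just the mod-$2$ analogue of the one-simplex parity count in Lemma~\ref{lem:cyclematching}(1), with reduction modulo~$2$ taking over the role that signed cancellation played for a single simplex. The only point that deserves a sentence of its own is the parity congruence $|a| \equiv a \pmod 2$ used to convert the $\ell^1$-norm into a simplex count modulo~$2$.
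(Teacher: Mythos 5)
Your proof is correct and follows essentially the same route as the paper: both arguments reduce the fundamental cycle modulo~$2$, use that $\Delta^n$ has an odd number of $(n-1)$-faces when $n$ is even, and exploit that the boundary must cancel over a basis of singular $(n-1)$-simplices, combined with $|a| \equiv a \pmod 2$. Your version just spells the counting out more explicitly than the paper's terse statement.
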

\begin{proof}
  Let $n := \dim M$ and let $c = \sum_{j=1}^k a_j \cdot \sigma_j \in
  C_n(M;\Z)$ be an integral fundamental cycle. Because $\Delta^n$ has
  an odd number of $(n-1)$-faces and because the singular
  $n$-simplices of~$M$ form a $\Z$-basis of~$C_n(M;\Z)$, the
  definition of the boundary operator yields that $\sum_{j=1}^k a_j$ is
  even. Then also $|c|_1 = \sum_{j=1}^k |a_j|$ is even and thus $\isv M$ 
  is even.
\end{proof}

\subsection{Dimension~$2$}

We start with the simple and well-known case of dimension~$2$: 

\def\gvertex(#1,#2){\fill (#1,#2) circle (0.1);} 
\def\ggvertex#1{\fill #1 circle (0.1);}

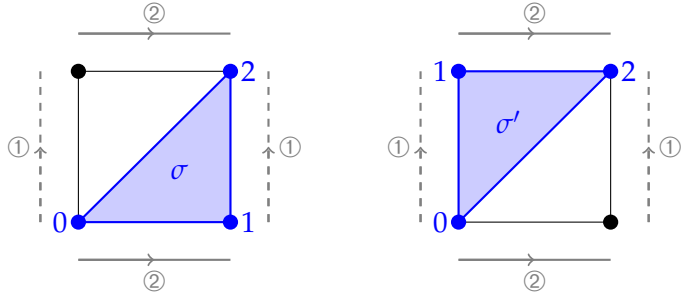
\begin{figure}
  \begin{center}
    \def\torusglue{%
      \begin{scope}[black!50]
      \draw[thick,dashed] (-0.5,0) -- (-0.5,2);
      \draw[thick,dashed] (2.5,0) -- (2.5,2);
      \draw[thick] (0,-0.5) -- (2,-0.5);
      \draw[thick] (0,2.5) -- (2,2.5);
       \draw[->,thick,dashed] (-0.5,0) -- (-0.5,1);
      \draw[->,thick,dashed] (2.5,0) -- (2.5,1);
      \draw[->,thick] (0,-0.5) -- (1,-0.5);
      \draw[->,thick] (0,2.5) -- (1,2.5);
      \draw (1,-0.5) node[anchor=north] {\ding{193}};
      \draw (1,2.5) node[anchor=south] {\ding{193}};
      \draw (-0.5,1) node[anchor=east] {\ding{192}};
      \draw (2.5,1) node[anchor=west] {\ding{192}};
      \end{scope}}
    \begin{tikzpicture}
      \torusglue
      \gvertex (0,2)
      \draw (0,0) -- (0,2) -- (2,2);
      \begin{scope}[blue,thick]
        \fill[blue!20] (0,0) -- (2,0) -- (2,2) -- cycle;
        \draw (0,0) -- (2,0) -- (2,2) -- cycle;
        \gvertex (0,0)
        \gvertex (2,0)
        \gvertex (2,2)
        \draw (0,0) node[anchor=east] {$0$};
        \draw (2,0) node[anchor=west] {$1$};
        \draw (2,2) node[anchor=west] {$2$};
        \draw (1.33,0.66) node {$\sigma$};
      \end{scope}
      \begin{scope}[shift={(5,0)}]
        \torusglue
        \gvertex (2,0)
        \draw (0,0) -- (2,0) -- (2,2);
        \begin{scope}[blue,thick]
          \fill[blue!20] (0,0) -- (0,2) -- (2,2) -- cycle;
          \draw (0,0) -- (0,2) -- (2,2) -- cycle;
          \gvertex (0,0)
          \gvertex (0,2)
          \gvertex (2,2)
          \draw (0,0) node[anchor=east] {$0$};
          \draw (0,2) node[anchor=east] {$1$};
          \draw (2,2) node[anchor=west] {$2$};
          \draw (0.66,1.33) node {$\sigma'$};
        \end{scope}
      \end{scope}
    \end{tikzpicture}
  \end{center}

  \caption{Fundamental cycle~$\sigma - \sigma'$ of the torus; the
    edges of the square are glued as indicated}
  \label{fig:toruscycle}
\end{figure}

\begin{prop}[integral simplicial volume of surfaces]\label{prop:surfaces}
  \hfil
  \begin{enumerate}
    \item We have~$\isv {S^2} = 2$.
    \item If $S$ is an oriented closed connected surface of genus~$g \in \N_{\geq 1}$, 
      then $\isv S = 4 \cdot g -2$.
  \end{enumerate}
\end{prop}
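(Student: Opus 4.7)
I would prove the two parts separately. For the sphere, $S^2$ is the double of $\Delta^2$ along $\partial\Delta^2$, so the difference of the two canonical inclusions $\Delta^2 \to S^2$ of the two ``hemispheres'' is an integral fundamental cycle of $\ell^1$-norm~$2$, giving $\isv{S^2} \leq 2$; the reverse inequality is immediate from Lemma~\ref{lem:parity}, which forces $\isv{S^2}$ to be a positive even integer. For genus~$g \geq 1$, the upper bound $\isv S \leq 4g-2$ comes from the classical polygon model: describe $S$ as a $4g$-gon with the standard edge identifications and fan-triangulate from one vertex to obtain $4g-2$ triangles whose signed images in $S$ assemble into a fundamental cycle.

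The lower bound $\isv S \geq 4g-2$ for $g \geq 1$ is the main point. Given an integral fundamental cycle $c = \sum_j a_j\sigma_j$ with $|c|_1 = N$, I would first clone summands by multiplicity to rewrite $c = \sum_{i=1}^N \epsilon_i \tilde\sigma_i$ with $\epsilon_i \in \{\pm 1\}$. Since $\partial c = 0$ in $C_1(S;\Z)$, the $3N$ boundary face-maps admit a perfect matching in which each pair consists of two face-maps of opposite signs that determine the same singular $1$-simplex in $S$. Using such a matching to glue $N$ abstract oriented $2$-simplices along edges produces a closed, orientable, $2$-dimensional pseudomanifold $Z$ with $F_Z=N$ triangles, $E_Z=3N/2$ edges, and a natural map $f \colon Z \to S$ satisfying $f_*[Z] = \fcl S$. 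Resolving the (isolated) non-manifold vertices of $Z$ by separating them according to the connected components of their links yields a closed orientable surface $\tilde Z$ with $\chi(\tilde Z) \geq \chi(Z)$ and a degree-one map to $S$; since $g \geq 1$, this forces the genus of $\tilde Z$ to be at least $g$, and hence $\chi(\tilde Z) \leq 2-2g$. Combining this with Euler's formula $\chi(Z) = V_Z - N/2$ and $V_Z \geq 1$ yields $N \geq 4g-2$.

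The principal obstacle is the orientation bookkeeping in the construction of $Z$: the signs $\epsilon_i$ and the edge gluings must be chosen coherently so that $Z$ is genuinely orientable with fundamental class $\sum_i \epsilon_i[\Delta^2_i]$, and so that the natural map $f$ sends this class to $\fcl S$. Everything else---Euler's formula, the standard fact that a degree-one map to a closed aspherical surface is $\pi_1$-surjective, and the vertex-resolution step---is routine, and for the torus case $g=1$ the surjectivity $\pi_1(\tilde Z) \twoheadrightarrow \Z^2$ still suffices to force $\chi(\tilde Z) \leq 0$.
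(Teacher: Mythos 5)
Your treatment of part~1 and of the upper bound in part~2 agrees with the paper's. For the lower bound $\isv S \geq 4g-2$ when $g \geq 1$, however, you take a genuinely different route. The paper combines the parity observation (Lemma~\ref{lem:parity}) with the strict inequality $\isv S > \sv S = 4g-4$, which it deduces from Jungreis's theorem that no straight simplex in $\mathbb{H}^2$ attains the maximal volume -- a non-trivial hyperbolic-geometry input. You instead build an oriented $2$-dimensional pseudomanifold $Z$ out of the $N = |c|_1$ signed copies of $\Delta^2$ occurring in a fundamental cycle $c$, resolve the non-manifold vertices, and estimate via Euler characteristic. This is more elementary and, once a gap is filled, correct; it also makes the asymmetry between $g=0$ and $g\geq 1$ transparent (for $g=0$ the relevant component can be a sphere, which gives nothing beyond parity).

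The gap is that nothing forces $Z$, let alone its vertex-resolution $\widetilde Z$, to be connected, yet you treat $\widetilde Z$ as a single closed orientable surface carrying a degree-one map to~$S$, conclude genus~$\geq g$, and hence $\chi(\widetilde Z)\leq 2-2g$. If $\widetilde Z$ has several components -- say one surface of positive genus plus some spheres -- then $\chi(\widetilde Z)$ can be arbitrarily large, and the chain $\chi(Z)\leq\chi(\widetilde Z)\leq 2-2g$ together with $V_Z\geq1$ no longer yields the bound. The repair is to argue componentwise: writing $\widetilde Z = \widetilde Z_1\sqcup\cdots\sqcup\widetilde Z_m$, the degrees $d_i$ of $f|_{\widetilde Z_i}$ sum to~$1$, so some $d_i\neq 0$; pulling back $H^1(S;\Q)$ shows $b_1(\widetilde Z_i)\geq 2g$, so that component has genus $\geq g$ and, using $V_i\geq 1$ and $E_i=3F_i/2$, carries $F_i\geq 4g-2$ of the triangles; every other component, being a closed surface with $3F_j=2E_j$, contributes $F_j\geq 2$ further triangles. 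Hence $N=\sum_i F_i\geq 4g-2$. With this addendum the argument is complete.
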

\begin{proof}
  In even dimension, there is no cycle consisting of a single simplex
  (Lemma~\ref{lem:cyclematching}). Thus, oriented closed
  connected surfaces~$S$ satisfy~$\isv S \geq 2$.
  Clearly, $\isv{S^2} = 2$ and
  $\isv{S^1 \times S^1} = 2$; the latter can be seen from the
  fundamental cycle depicted in Figure~\ref{fig:toruscycle}.

  Let $S$ be an oriented closed connected surface of genus~$g \in
  \N_{>1}$.  Then there is an explicit triangulation of the regular
  $4\cdot g$-gon that shows that $\isv{S} \leq 4 \cdot g -2$
  \cite{vbc,benedettipetronio}. On the other hand, we have
  \[ 4 \cdot g - 4  = \sv S \leq \isv S.
  \]
  No fundamental cycle on~$S$ realises the optimal value~$\sv S = 4
  \cdot g - 4$ because no straight singular simplex on~$\mathbb{H}^2$
  has maximal volume~\cite{jungreis}. Therefore, $\isv S > 4 \cdot g -
  4$. Because $\isv S$ is even (Lemma~\ref{lem:parity}), we obtain
  the desired estimate~$\isv S \geq 4 \cdot g -2$.
\end{proof}

\subsection{Dimension~$3$}

As a next step, we consider $3$-manifolds. 

\begin{prop}\label{prop:isv32}
  The following oriented closed connected $3$-manifolds~$M$ satisfy~$\isv M =2$:
  \[ S^1 \times S^2, 
     \quad
     \R P^3, 
     \quad
     L(3,1).
  \]
\end{prop}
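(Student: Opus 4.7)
The plan is to establish both inequalities $\isv M \geq 2$ and $\isv M \leq 2$ for each of the three listed manifolds.

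For the lower bound, none of $S^1\times S^2$, $\R P^3$, $L(3,1)$ is homotopy equivalent to~$S^3$: their fundamental groups are $\Z$, $\Z/2$, and $\Z/3$, respectively, none of which is trivial. Hence Theorem~\ref{thm:odd1} immediately yields $\isv M \geq 2$ in each case. (Equivalently, one could note that each of these manifolds has a non-trivial first homology or first cohomology class, and combine Lemma~\ref{lem:oddhomology} with the parity consideration from Lemma~\ref{lem:cyclematching}.)

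For the upper bound, the strategy is to exhibit, for each~$M$ in the list, a singular fundamental cycle of the form $\sigma_1 - \sigma_2$ (with an appropriate sign convention), where $\sigma_1, \sigma_2 \colon \Delta^3 \to M$ are singular $3$-simplices. Combinatorially, this amounts to equipping~$M$ with a $\Delta$-complex (semi-simplicial) structure with exactly two tetrahedra: the eight boundary $2$-faces of $\Delta^3 \sqcup \Delta^3$ are paired up so that they cancel in the singular boundary, and the resulting quotient is homeomorphic to~$M$. This is the natural two-simplex analogue of the matching/model-space construction from Section~\ref{sec:model}, and two-tetrahedron one-vertex triangulations of these three $3$-manifolds are classical minimal triangulations. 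Concretely, for $\R P^3 = L(2,1)$ I would start from $\R P^3 = D^3 /{\sim}$ with antipodal boundary identifications and split the ball into two tetrahedra along a diagonal disk; for $L(3,1)$ I would use a layered triangulation along the standard genus-one Heegaard torus, noting that $L(p,q)$ admits such a layered triangulation with $p-1$ tetrahedra; for $S^1 \times S^2$ I would use the standard two-tetrahedron one-vertex triangulation associated with its genus-one Heegaard splitting.

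For each explicit pair of face identifications, two things must then be verified: (a) the quotient space is homeomorphic to the claimed manifold, which can be checked via the standard link conditions together with identification of~$\pi_1$ and of an orientation; and (b) with the induced orientations, $\sigma_1 - \sigma_2$ is a cycle whose class equals $\fcl M \in H_3(M;\Z)$. Item~(b) is mechanical, directly analogous to the boundary computation in the model-class remark of Section~\ref{sec:model}, and it automatically produces $|\sigma_1 - \sigma_2|_1 = 2$. The main obstacle, and the only real work, is~(a): writing down the face pairings cleanly and recognising the resulting quotient as the correct manifold in each of the three cases. Once the combinatorial bookkeeping is carried out for each example, the proposition follows.
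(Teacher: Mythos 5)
Your lower-bound argument via Theorem~\ref{thm:odd1} is correct and matches the paper. For the upper bound, the "two-simplex cycle" strategy is right in spirit for $S^1\times S^2$ (the paper handles this via Proposition~\ref{prop:isvs1}) and for $\R P^3$ (the paper builds $\sigma,\sigma'$ on $D^3/\!\sim$ filling the two hemispherical halves of the ball, with $\partial_3(\sigma+\sigma')=0$ and local degree~$1$). But your plan breaks down precisely at~$L(3,1)$: immediately after the proof of this proposition the paper observes that although $L(3,1)$ admits the obvious decomposition into two tetrahedra, there is \emph{no} affine parametrisation of those two tetrahedra that yields an integral fundamental cycle. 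So "write down a two-tetrahedron $\Delta$-complex structure and read off a fundamental cycle" is exactly the move that fails here.

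The actual construction the paper uses for $L(3,1)$ is subtler. One starts with two non-affine singular $3$-simplices $\sigma,\sigma''$ on $D^3/\!\sim$ whose faces cancel so that $\sigma+\sigma''$ is a cycle, but this cycle represents $2\cdot\fcl{L(3,1)}$, not the fundamental class. To fix the degree without disturbing the boundary, one inserts a "$\Delta^2$-shaped pillow" of local degree~$-3$ along a middle slice of $\Delta^3$, replacing $\sigma''$ by a singular simplex $\sigma'$ with $\partial_3\sigma'=\partial_3\sigma''$ and local degree $1-3=-2$; then $-\sigma-\sigma'$ is a cycle of $\ell^1$-norm $2$ representing $\fcl{L(3,1)}$. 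Your proposal does not anticipate needing such a degree-correcting modification, and the layered-triangulation suggestion as stated runs into the obstruction the paper explicitly flags; this is the genuine gap in the blind proof.
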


\begin{proof} 
  In view of Theorem~\ref{thm:odd1}, all of these manifolds have
  integral simplicial volume at least~$2$.

  \emph{Ad~$S^1 \times S^2$:}
  By Proposition~\ref{prop:isvs1} below, $\isv {S^1 \times S^2} =
  2$. 

  \emph{Ad~$\R P^3$:} We describe an integral fundamental cycle
  for~$\R P^3$ that consists of two singular simplices: We view $\R
  P^3$ as quotient of the $3$-ball~$D^3$ modulo the antipodal action
  on~$\partial D^3 = S^2$. We consider the singular
  $3$-simplices~$\sigma, \sigma'$ on~$\R P^3$ as depicted in
  Figure~\ref{fig:rp3}. Then
  \[ \partial_3 (\sigma + \sigma') = 0 
  \]
  and looking at the local degree shows that $\sigma + \sigma'$
  represents~$\fcl{\R P^3}$.

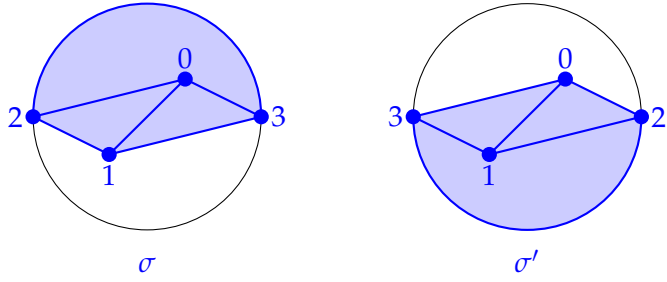
\begin{figure}
  \begin{center}
    \begin{tikzpicture}
      \draw (0,0) arc (180:360:1.5);
      \begin{scope}[color=blue]
        \fill[blue!20] (0,0) -- (1,-0.5) -- (3,0) -- 
        (3,0) arc (0:180:1.5) -- cycle;
        \draw (0,0) node[anchor=east] {$2$};
        \draw (1,-0.5) node[anchor=north] {$1$};
        \draw (2,0.5) node[anchor=south] {$0$};
        \draw (3,0) node[anchor=west] {$3$};
        \draw[thick] (3,0) arc (0:180:1.5);
        \draw[thick] (0,0) -- (1,-0.5) -- (3,0) -- (2,0.5) -- cycle;
        \draw[thick] (1,-0.5) -- (2,0.5);
        \gvertex(0,0); 
        \gvertex(3,0);
        \gvertex(1,-0.5);
        \gvertex(2,0.5);
        \draw (1.5,-2.2) node[anchor=south] {$\sigma$};
      \end{scope}
      \begin{scope}[shift={(5,0)}]
      \draw (3,0) arc (0:180:1.5);
      \begin{scope}[color=blue]
        \fill[blue!20] (3,0) -- (2,0.5) -- (0,0) -- 
        (0,0) arc (180:360:1.5) -- cycle;
        \draw (0,0) node[anchor=east] {$3$};
        \draw (1,-0.5) node[anchor=north] {$1$};
        \draw (2,0.5) node[anchor=south] {$0$};
        \draw (3,0) node[anchor=west] {$2$};
        \draw[thick] (0,0) arc (180:360:1.5);
        \draw[thick] (0,0) -- (1,-0.5) -- (3,0) -- (2,0.5) -- cycle;
        \draw[thick] (1,-0.5) -- (2,0.5);
        \gvertex(0,0); 
        \gvertex(3,0);
        \gvertex(1,-0.5);
        \gvertex(2,0.5);
        \draw (1.5,-2.2) node[anchor=south] {$\sigma'$};
      \end{scope}      
      \end{scope}
    \end{tikzpicture}
  \end{center}

  \caption{A fundamental cycle of~$\R P^3$ consisting of two singular
    simplices: The simplex~$\sigma$ fills the northern part of the
    ball with the depicted combinatorial structure; here, the
    face~$\sigma\circ i_0$ is the front half of the northern
    hemisphere, and $\sigma \circ i_3$ is the back half of the
    northern hemisphere. Similarly, the simplex~$\sigma'$ fills the
    southern part of the ball.}
  \label{fig:rp3}
\end{figure}

  \emph{Ad~$L(3,1)$:} The construction of a fundamental cycle of~$L(3,1)$ 
  consisting of two singular simplices is a little bit more delicate: 
  We view the lense space~$L(3,1)$ as quotient of
  the $3$-ball~$D^3$, where points on the southern 
  hemisphere of~$\partial D^3 = S^2$ are identified with points on the
  northern hemisphere after reflection at the equator and rotating
  around the North-South axis by~$2\cdot \pi/3$. We consider the
  two singular $3$-simplices~$\sigma, \sigma''$ on~$L(3,1)$ as depicted in
  Figure~\ref{fig:l31}. 

  By construction, $\partial_3(\sigma + \sigma'') = 0$ and looking at 
  the local degree (inside the ball) shows that $\sigma + \sigma''$ 
  represents~$2 \cdot \fcl{L(3,1)}$. We now manipulate the local degree 
  of~$\sigma''$ in order to obtain a fundamental cycle of~$L(3,1)$: We 
  may construct~$\sigma''$ in such a way that $\sigma''$ on the middle slice
  \[ S := \bigl\{ (t_0, t_1, t_2, t_3) \in \R^4 
          \bigm| t_0 + \dots + t_3  = 1,\ t_0 = t_3 \bigr\} 
  \]
  of~$\Delta^3$ is the obvious parametrisation of the equatorial disk
  in~$D^3$.  Let $P$ be the $\Delta^2$-shaped pillow obtained
  from~$\Delta^2 \times [0,1]$ by collapsing the vertical intervals
  in~$\partial \Delta^2 \times [0,1]$. Passing to the universal
  covering of~$L(3,1)$ one can easily construct a continuous map~$\tau
  \colon P \longrightarrow L(3,1)$ that coincides with~$\sigma''|_S$
  on the top and the bottom of~$P$ and that has local degree~$-3$. We
  then glue $\tau$ into~$\sigma''$ at~$S$; because gluing~$P$
  into~$\Delta^3$ at~$S$ yields a space that is homeomorphic
  to~$\Delta^3$ (while preserving the boundary~$\partial \Delta^3$),
  we thus obtain a singular simplex~$\sigma' \colon \Delta^3
  \longrightarrow L(3,1)$ of local degree~$1 - 3 = -2$
  with~$\partial_3 \sigma' = \partial_3 \sigma''$.

  Then $\partial_3(- \sigma - \sigma') = 0$ and looking at the local degree
  shows that the cycle~$-\sigma - \sigma'$ represents~$\fcl{L(3,1)}$.
\end{proof}

The lense space~$L(3,1)$ admits an obvious 
deomposition into two tetrahedra; however, it is not hard to see that
there is \emph{no} affine parametrisation of these tetrahedra that
yields an integral fundamental cycle of~$L(3,1)$.

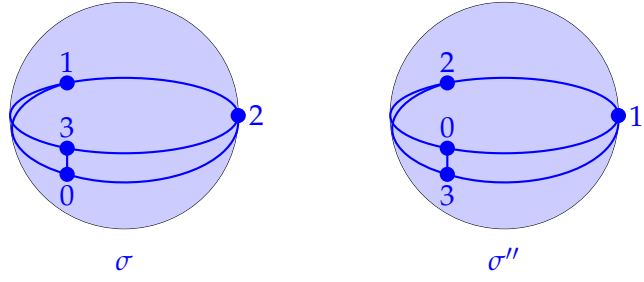
\begin{figure}
  \begin{center}
    \begin{tikzpicture}
      \draw (1.5,0) circle (1.5); 
      \begin{scope}[color=blue,thick]
        \fill[blue!20] (1.5,0) circle (1.5);
        \draw (1.5,0) circle (1.5 and 0.5);
        \begin{scope}[shift={(1.5,0)}]
          \ggvertex{(0:1.5 and 0.5)};
          \ggvertex{(120:1.5 and 0.5)};
          \ggvertex{(240:1.5 and 0.5)};
          \ggvertex{(240:1.5 and 0.9)};
          \draw (240:1.5 and 0.5) -- (240:1.5 and 0.9);
          \draw (120:1.5 and 0.5) arc (120:240:1.45 and 0.7);
          \draw (240:1.5 and 0.9) arc (240:360:1.5 and 0.8);
          \draw (240:1.5 and 0.5) node[anchor=south] {$3$};
          \draw (240:1.5 and 0.9) node[anchor=north] {$0$};
          \draw (120:1.5 and 0.5) node[anchor=south] {$1$};
          \draw (0:1.5 and 0.5) node[anchor=west] {$2$};
        \end{scope}
        \draw (1.5,-2.2) node[anchor=south] {$\sigma$};
      \end{scope}
      \begin{scope}[shift={(5,0)}]
      \draw (1.5,0) circle (1.5); 
      \begin{scope}[color=blue,thick]
        \fill[blue!20] (1.5,0) circle (1.5);
        \draw (1.5,0) circle (1.5 and 0.5);
        \begin{scope}[shift={(1.5,0)}]
          \ggvertex{(0:1.5 and 0.5)};
          \ggvertex{(120:1.5 and 0.5)};
          \ggvertex{(240:1.5 and 0.5)};
          \ggvertex{(240:1.5 and 0.9)};
          \draw (240:1.5 and 0.5) -- (240:1.5 and 0.9);
          \draw (120:1.5 and 0.5) arc (120:240:1.45 and 0.7);
          \draw (240:1.5 and 0.9) arc (240:360:1.5 and 0.8);
          \draw (240:1.5 and 0.5) node[anchor=south] {$0$};
          \draw (240:1.5 and 0.9) node[anchor=north] {$3$};
          \draw (120:1.5 and 0.5) node[anchor=south] {$2$};
          \draw (0:1.5 and 0.5) node[anchor=west] {$1$};
        \end{scope}
        \draw (1.5,-2.2) node[anchor=south] {$\sigma''$};
      \end{scope}
      \end{scope}
    \end{tikzpicture}
  \end{center}

  \caption{A fundamental cycle of~$L(3,1)$ consisting of two singular
    simplices: The simplex~$\sigma$ fills the ball, where $\sigma
    \circ i_0$ covers the northern hemisphere, $\sigma \circ i_3$
    covers the southern hemisphere and the faces~$\sigma \circ i_1$,
    $\sigma \circ i_2$ lie on the equator. The simplex~$\sigma''$ is 
    constructed in a similar way; here, $\sigma'' \circ i_0$ covers 
    the southern hemisphere and $\sigma'' \circ i_3$ covers the 
    northern hemisphere. The simplex~$\sigma'$ is then obtained from~$\sigma''$ 
    by inserting a pillow of local degree~$-3$. 
  }
  \label{fig:l31}
\end{figure}

\begin{rem}\label{rem:32}
  If $M$ is an oriented closed connected $3$-manifold that is
  \emph{not} homotopy equivalent/homeomorphic to~$S^1 \times S^2$, $\R P^3$
  or~$L(3,1)$, then we have $\isv M \neq 2$: The model space construction of
  Section~\ref{sec:model} can be extended to the case of singular
  cycles that consist of more than one simplex.  If $M$ has a
  fundamental cycle~$c$ consisting of two singular simplices~$\sigma,
  \sigma' \colon \Delta^3 \longrightarrow M$, then (depending on the
  signs of the coefficients) there are two types of matchings to be
  considered:
  \begin{itemize}
    \item If $c = \sigma + \sigma'$ (or~$c = - \sigma - \sigma')$,
      then one obtains a bijection
      \[ \{0, 2, 0', 2' \} \longrightarrow \{1,3,1',3'\},
      \]
      where the sets $\{0, \dots, n\}$ and $\{0', \dots, n'\}$ 
      correspond to the faces of~$\sigma$ and $\sigma'$ respectively.
    \item If $c = \sigma - \sigma'$, then one obtains a 
      bijection
      \[ \{0, 2, 1', 3'\} \longrightarrow \{1,3,0'2'\}. 
      \]
  \end{itemize}

  Similarly to the proof of Proposition~\ref{prop:modelpi1}, a lengthy
  calculation shows that the (connected components of the)
  corresponding model spaces constructed out of two copies
  of~$\Delta^3$ whose facets are glued according to these matchings
  have cyclic fundamental group. 
  Moreover, we obtain (e.g., by computer calculations) the integral
  homology groups of these model spaces in
  Table~\ref{table:computation32plus} and
  Table~\ref{table:computation32minus}.

  \begin{table}
    {
    \begin{tabular}{llllllll}
      \makebox[0pt][l]{Matching}
      & & & &\makebox[0pt][l]{$H_*(\args;\Z)$ in degree}
      \\
      & & & & 0 & 1 & 2 & 3
      \\[.2em]
      \hline
      \\[.5em]
      $(0, 1)$ & $(2, 3)$ & $(0', 1')$ & $(2', 3')$ & $\Z^2$ & $0$ & $0$ & $\Z^2$\\
$(0, 3)$ & $(2, 1)$ & $(0', 1')$ & $(2', 3')$ & $\Z^2$ & $0$ & $0$ & $\Z^2$\\
$(0, 1')$ & $(2, 3)$ & $(0', 1)$ & $(2', 3')$ & $\Z$ & $0$ & $0$ & $\Z$\\
$(0, 3)$ & $(2, 1')$ & $(0', 1)$ & $(2', 3')$ & $\Z$ & $0$ & $\Z$ & $\Z$\\
$(0, 1')$ & $(2, 1)$ & $(0', 3)$ & $(2', 3')$ & $\Z$ & $0$ & $0$ & $\Z$\\
$(0, 1)$ & $(2, 1')$ & $(0', 3)$ & $(2', 3')$ & $\Z$ & $0$ & $0$ & $\Z$\\
$(0, 3')$ & $(2, 1')$ & $(0', 3)$ & $(2', 1)$ & $\Z$ & $0$ & $\Z$ & $\Z$\\
$(0, 1')$ & $(2, 3')$ & $(0', 3)$ & $(2', 1)$ & $\Z$ & $0$ & $\Z^2$ & $\Z$\\
$(0, 1')$ & $(2, 3)$ & $(0', 3')$ & $(2', 1)$ & $\Z$ & $0$ & $\Z$ & $\Z$\\
$(0, 3')$ & $(2, 3)$ & $(0', 1')$ & $(2', 1)$ & $\Z$ & $0$ & $0$ & $\Z$\\
$(0, 3)$ & $(2, 3')$ & $(0', 1')$ & $(2', 1)$ & $\Z$ & $0$ & $\Z$ & $\Z$\\
$(0, 3)$ & $(2, 1')$ & $(0', 3')$ & $(2', 1)$ & $\Z$ & $\Z/3$ & $0$ & $\Z$\\
$(0, 3')$ & $(2, 1)$ & $(0', 3)$ & $(2', 1')$ & $\Z$ & $0$ & $0$ & $\Z$\\
$(0, 1)$ & $(2, 3')$ & $(0', 3)$ & $(2', 1')$ & $\Z$ & $0$ & $0$ & $\Z$\\
$(0, 1)$ & $(2, 3)$ & $(0', 3')$ & $(2', 1')$ & $\Z^2$ & $0$ & $0$ & $\Z^2$\\
$(0, 3')$ & $(2, 3)$ & $(0', 1)$ & $(2', 1')$ & $\Z$ & $0$ & $0$ & $\Z$\\
$(0, 3)$ & $(2, 3')$ & $(0', 1)$ & $(2', 1')$ & $\Z$ & $\Z/2$ & $0$ & $\Z$\\
$(0, 3)$ & $(2, 1)$ & $(0', 3')$ & $(2', 1')$ & $\Z^2$ & $0$ & $0$ & $\Z^2$\\
$(0, 3')$ & $(2, 1)$ & $(0', 1')$ & $(2', 3)$ & $\Z$ & $0$ & $0$ & $\Z$\\
$(0, 1)$ & $(2, 3')$ & $(0', 1')$ & $(2', 3)$ & $\Z$ & $0$ & $0$ & $\Z$\\
$(0, 1)$ & $(2, 1')$ & $(0', 3')$ & $(2', 3)$ & $\Z$ & $0$ & $\Z$ & $\Z$\\
$(0, 3')$ & $(2, 1')$ & $(0', 1)$ & $(2', 3)$ & $\Z$ & $0$ & $\Z^2$ & $\Z$\\
$(0, 1')$ & $(2, 3')$ & $(0', 1)$ & $(2', 3)$ & $\Z$ & $\Z/2$ & $0$ & $\Z$\\
$(0, 1')$ & $(2, 1)$ & $(0', 3')$ & $(2', 3)$ & $\Z$ & $\Z/2$ & $0$ & $\Z$
    \end{tabular}}

    \bigskip

    \caption{Integral homology of model spaces for cycles consisting of two singular $3$-simplices with the same sign}
    \label{table:computation32plus}
  \end{table}

  \begin{table}
    {
    \begin{tabular}{llllllll}
      \makebox[0pt][l]{Matching}
      & & & &\makebox[0pt][l]{$H_*(\args;\Z)$ in degree}
      \\
      & & & & 0 & 1 & 2 & 3
      \\[.2em]
      \hline
      \\[.5em]
      $(0, 1)$ & $(2, 3)$ & $(1', 0')$ & $(3', 2')$ & $\Z^2$ & $0$ & $0$ & $\Z^2$\\
$(0, 3)$ & $(2, 1)$ & $(1', 0')$ & $(3', 2')$ & $\Z^2$ & $0$ & $0$ & $\Z^2$\\
$(0, 0')$ & $(2, 3)$ & $(1', 1)$ & $(3', 2')$ & $\Z$ & $0$ & $0$ & $\Z$\\
$(0, 3)$ & $(2, 0')$ & $(1', 1)$ & $(3', 2')$ & $\Z$ & $0$ & $\Z$ & $\Z$\\
$(0, 0')$ & $(2, 1)$ & $(1', 3)$ & $(3', 2')$ & $\Z$ & $0$ & $0$ & $\Z$\\
$(0, 1)$ & $(2, 0')$ & $(1', 3)$ & $(3', 2')$ & $\Z$ & $0$ & $0$ & $\Z$\\
$(0, 2')$ & $(2, 0')$ & $(1', 3)$ & $(3', 1)$ & $\Z$ & $0$ & $\Z$ & $\Z$\\
$(0, 0')$ & $(2, 2')$ & $(1', 3)$ & $(3', 1)$ & $\Z$ & $0$ & $\Z$ & $\Z$\\
$(0, 0')$ & $(2, 3)$ & $(1', 2')$ & $(3', 1)$ & $\Z$ & $0$ & $0$ & $\Z$\\
$(0, 2')$ & $(2, 3)$ & $(1', 0')$ & $(3', 1)$ & $\Z$ & $0$ & $0$ & $\Z$\\
$(0, 3)$ & $(2, 2')$ & $(1', 0')$ & $(3', 1)$ & $\Z$ & $0$ & $\Z$ & $\Z$\\
$(0, 3)$ & $(2, 0')$ & $(1', 2')$ & $(3', 1)$ & $\Z$ & $0$ & $0$ & $\Z$\\
$(0, 2')$ & $(2, 1)$ & $(1', 3)$ & $(3', 0')$ & $\Z$ & $0$ & $0$ & $\Z$\\
$(0, 1)$ & $(2, 2')$ & $(1', 3)$ & $(3', 0')$ & $\Z$ & $0$ & $\Z$ & $\Z$\\
$(0, 1)$ & $(2, 3)$ & $(1', 2')$ & $(3', 0')$ & $\Z^2$ & $0$ & $0$ & $\Z^2$\\
$(0, 2')$ & $(2, 3)$ & $(1', 1)$ & $(3', 0')$ & $\Z$ & $0$ & $\Z$ & $\Z$\\
$(0, 3)$ & $(2, 2')$ & $(1', 1)$ & $(3', 0')$ & $\Z$ & $\Z$ & $\Z$ & $\Z$\\
$(0, 3)$ & $(2, 1)$ & $(1', 2')$ & $(3', 0')$ & $\Z^2$ & $0$ & $0$ & $\Z^2$\\
$(0, 2')$ & $(2, 1)$ & $(1', 0')$ & $(3', 3)$ & $\Z$ & $0$ & $0$ & $\Z$\\
$(0, 1)$ & $(2, 2')$ & $(1', 0')$ & $(3', 3)$ & $\Z$ & $0$ & $0$ & $\Z$\\
$(0, 1)$ & $(2, 0')$ & $(1', 2')$ & $(3', 3)$ & $\Z$ & $0$ & $0$ & $\Z$\\
$(0, 2')$ & $(2, 0')$ & $(1', 1)$ & $(3', 3)$ & $\Z$ & $0$ & $\Z$ & $\Z$\\
$(0, 0')$ & $(2, 2')$ & $(1', 1)$ & $(3', 3)$ & $\Z$ & $0$ & $0$ & $\Z$\\
$(0, 0')$ & $(2, 1)$ & $(1', 2')$ & $(3', 3)$ & $\Z$ & $0$ & $0$ & $\Z$
    \end{tabular}}

    \bigskip

    \caption{Integral homology of model spaces for cycles consisting of two singular $3$-simplices with opposite signs}
    \label{table:computation32minus}
  \end{table}

  \forget{%
  \begin{table}
    {\tiny
    \begin{tabular}{llllllll}
      \makebox[0pt][l]{Matching}
      & & & &\makebox[0pt][l]{$H_*(\args;\Z)$ in degree}
      \\
      & & & & 0 & 1 & 2 & 3
      \\[.2em]
      \hline
      \\[.5em]
      \input{computations32.txt}
    \end{tabular}}

    \bigskip

    \caption{Integral homology of model spaces for cycles consisting of two singular $3$-simplices}
    \label{table:computation32}
  \end{table}}

  Hence, all these model spaces have first integral homology group
  isomorphic to 
  \[ 0, \quad \Z/2, \quad \Z/3, \quad\text{or}\quad \Z.
  \]
  Because these model spaces have cyclic (in particular, Abelian) fundamental
  group, these are also the only possible isomorphism types of
  fundamental groups of such model spaces.

  Let $\pi$ be the matching corresponding to the considered fundamental 
  cycle~$c$ of~$M$, let $N_\pi = \Delta^3 \sqcup \Delta^3 /\!\!\sim$ be the 
  associated model space, let $\beta_\pi \in H_3(N_\pi;\Z)$ be the 
  model class and let $f \colon N_\pi \longrightarrow M$ be the continuous 
  map modelling~$c$. Then $H_3(f;\Z)(\beta_\pi) = \fcl M$, and 
  \begin{align*}
    \xymatrix{%
      H_k(N_\pi;\Z) 
      \ar[r]^-{H_k(f;\Z)}
      & H_k(M;\Z)
      \\
      H^{3-k}(N_\pi;\Z)
      \ar[u]^{\args\cap \beta_\pi}
      & H^{3-k}(M;\Z)
      \ar[l]^-{H^{3-k}(f;\Z)}
      \ar[u]^{\cong}_{\args \cap \fcl M}
    }
  \end{align*}
  is commutative for all~$k \in \N$ in view of the naturality of the
  cap-product. The right vertical arrow is an isomorphism by 
  Poincar\'e duality. Hence, $H_k(f;\Z) \colon H_k(N_\pi;\Z) \longrightarrow 
  H_k(M;\Z)$ is split surjective.

  Moreover, covering theory shows that $\pi_1(f) \colon \pi_1(N_\pi)
  \longrightarrow \pi_1(M)$ is surjective. Because $\pi_1(N_\pi)$ is
  cyclic, also $\pi_1(M)$ is cyclic and thus isomorphic
  to~$H_1(M;\Z)$. As we have seen above, $H_1(f;\Z)$ is split
  surjective, and so $\pi_1(f) \colon \pi_1(N_\pi;\Z) \longrightarrow
  \pi_1(M)$ is \emph{split} surjective. Therefore, the fundamental 
  group of~$M$ is isomorphic to
  \[ 0, \quad \Z/2, \quad \Z/3, \quad\text{or}\quad \Z.
  \]
  Now the classification of oriented closed connected
  $3$-manifolds~\cite{afw}  shows that $M$ is homotopy equivalent
  (even homeomorphic) to~$S^3$, $\R P^3$, $L(3,1)$, or $S^1 \times S^2$.  
  As we know that $\isv{S^3} = 1 \neq 2$ the only remaining candidates 
  for~$M$ are $\R P^3$, $L(3,1)$, or $S^1 \times S^2$, as claimed.
\end{rem}

\subsection{Higher dimensions}

The example of the torus generalises as follows:

\begin{prop}\label{prop:isvs1}
  For all~$n \in \N_{>1}$ we have
  \[ \isv{S^1 \times S^{n-1}} = 2
     =
     \begin{cases}
       \ \isv{S^1} \cdot \isv{S^{n-1}} & \text{if $n$ is odd}\\
       \ 2 \cdot \isv{S^1} \cdot \isv{S^{n-1}} & \text{if $n$ is even}.
     \end{cases}
  \]
\end{prop}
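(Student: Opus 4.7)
The plan is to establish the two inequalities $\isv{S^1 \times S^{n-1}} \geq 2$ and $\isv{S^1 \times S^{n-1}} \leq 2$ separately and then to read off the parity identities from the known values $\isv{S^1} = 1$ and $\isv{S^{n-1}} \in \{1,2\}$ (depending on the parity of $n-1$).

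For the lower bound I would split on the parity of $n$. If $n$ is odd, then $S^1 \times S^{n-1}$ is an odd-dimensional oriented closed connected manifold whose fundamental group $\pi_1(S^1 \times S^{n-1}) \cong \Z$ is non-trivial, so $S^1 \times S^{n-1}$ is not homotopy equivalent to $S^n$; Theorem~\ref{thm:odd1} then forces $\isv{S^1 \times S^{n-1}} \neq 1$, and hence $\isv{S^1 \times S^{n-1}} \geq 2$. If $n$ is even, Lemma~\ref{lem:parity} gives that $\isv{S^1 \times S^{n-1}}$ is even, and positivity of the fundamental class forces it to be at least $2$.

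The harder step is the upper bound, where I would construct an explicit integral fundamental cycle consisting of two singular $n$-simplices, generalising the torus cycle of Figure~\ref{fig:toruscycle}. One approach is to view $S^1 \times S^{n-1}$ as a quotient of an $n$-dimensional cylinder-like polytope whose top and bottom $(n-1)$-dimensional faces are identified (to produce the $S^1$-factor) and whose cross-sections model $S^{n-1}$ (via the single-simplex model of Remark~\ref{rem:susp} when $n-1$ is odd, and via an analogous two-simplex model when $n-1$ is even), then to cut this polytope along a suitable diagonal $(n-1)$-dimensional hypersurface into two singular $n$-simplices $\sigma$ and $\sigma'$. A direct face-tracking check verifies that $\sigma \pm \sigma'$ (with the sign dictated by the chosen orientations) is a cycle: the two copies of the diagonal face cancel against each other, while the remaining outer faces are paired by the cylinder identification and by the identifications within the sphere model. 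A local-degree computation in the interior then confirms that the resulting cycle represents $\fcl{S^1 \times S^{n-1}}$. Making this diagonal-cut construction uniform in $n$, and compatible with the parity-dependent cross-section model for $S^{n-1}$, is the main obstacle.

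The parity identities then drop out: $\isv{S^1} = 1$ is witnessed by the canonical projection $\Delta^1 \twoheadrightarrow \Delta^1/\partial \Delta^1 \cong_{\Top} S^1$; for odd $n$ we have $n-1$ even, so $\isv{S^{n-1}} = 2$ (Proposition~\ref{prop:surfaces} in dimension~$2$, with the higher-dimensional analogue following from the same two-simplex construction for the upper bound and from Lemma~\ref{lem:parity} together with Theorem~\ref{thm:odd1} for the lower bound), giving $\isv{S^1} \cdot \isv{S^{n-1}} = 2$; for even $n$ we have $n-1$ odd, so $\isv{S^{n-1}} = 1$ by Theorem~\ref{thm:odd1}, giving $2 \cdot \isv{S^1} \cdot \isv{S^{n-1}} = 2$. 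Substituting yields the two cases of the displayed equation.
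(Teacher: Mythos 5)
Your lower bound argument is fine (in fact Theorem~\ref{thm:odd1} already gives $\isv{S^1\times S^{n-1}}\neq 1$ for both parities of $n$ at once, since it \emph{characterises} $\isv M=1$, so the parity split there is unnecessary), and the closing ``parity identities'' bookkeeping is correct once $\isv{S^1\times S^{n-1}}=2$, $\isv{S^1}=1$, and the standard fact $\isv{S^m}=2$ for even $m\geq 2$ are in hand.

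The genuine gap is in the upper bound, which is the real content of the proposition. You propose to realise $S^1\times S^{n-1}$ as a ``cylinder-like polytope'' $P\times[0,1]$ (with $P$ a simplex-quotient model of $S^{n-1}$ and top and bottom identified) and then cut it diagonally into two $n$-simplices, as in the torus picture. This works when $n=2$ because the cross-section $P=\Delta^1$ is itself a simplex, so the cylinder is the honest square $[0,1]^2$. But as soon as $n\geq 3$ the cross-section $P$ is a nontrivial quotient of a simplex (for instance $\Delta^{n-1}/\partial\Delta^{n-1}$ or a model space $M_\pi$), and $P\times[0,1]$ is no longer a polytope that admits a two-simplex triangulation compatible with the identifications: there is no ``diagonal hypersurface'' that splits it into two $n$-simplices while keeping track of how the boundary faces of $P$ are glued. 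You flag this yourself as ``the main obstacle'', but the obstacle is precisely the proposition. The paper sidesteps the polytope decomposition entirely: it defines the two singular simplices $\sigma,\sigma'\colon\Delta^n\to S^1\times S^{n-1}$ by prescribing their two product components separately --- the $S^1$-component by the explicit exponential $(t_0,\dots,t_n)\mapsto e^{2\pi i\sum_j j t_j}$ (the same for both simplices), and the $S^{n-1}$-components by assigning to each of the $n+1$ faces a ``degree'' in $\{0,1\}$ (constant map or collapse $\Delta^{n-1}\to S^{n-1}$), chosen according to a parity-dependent matching of faces; the condition $\sum(-1)^jd_j=0$ lets one extend these prescribed faces to all of $\Delta^n$. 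One then verifies cancellation of faces combinatorially and computes the degree via the Alexander--Whitney cross-product. Your sketch does not reach any such explicit construction, so as written the upper bound is not established for $n\geq 3$.
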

\begin{proof}
  Theorem~\ref{thm:odd1} shows that $\isv{S^1 \times S^{n-1}} > 1$
  because $S^1 \times S^{n-1} \not\simeq S^n$.  Therefore, it suffices
  to construct a fundamental cycle~$\sigma - \sigma'$ of~$S^1 \times
  S^{n-1}$ consisting of two singular simplices~$\sigma, \sigma'
  \colon \Delta^n \longrightarrow S^1 \times S^{n-1}$. We now
  describe the construction of such singular simplices; to this end we
  construct the two factors of these maps separately.

  The $S^1$-factors are defined on~$\Delta^n \subset \R^{n+1}$ by 
  \begin{align*}
    \sigma_1 = \sigma'_1 \colon \Delta^n & \longrightarrow S^1 \subset \C \\
    (t_0, \dots, t_n) & \longmapsto e^{2 \pi i \cdot \sum_{j=0}^n j \cdot t_j}.
  \end{align*}

  In order to understand the definition of the $S^{n-1}$-factors we first 
  give the corresponding matchings:
  \begin{itemize}
    \item If $n$ is even, then we consider the matching~$\pi$ given by
      \begin{align*} 
        0 & \leftrightarrow n' \\
        1 & \leftrightarrow 1' \\
        2 & \leftrightarrow 2' \\
        & \ \ \  \vdots\\
        n-1 & \leftrightarrow (n-1)' \\
        n & \leftrightarrow 0'.
      \end{align*}
      Moreover, we define
      \begin{align*} 
        d_0 & := 1 =: d'_n &
        d_1 & := 1 =: d'_1 \\
        \forall_{j \in \{2, \dots, n-1} \quad d_j & := 0 =: d'_j &
        d_n & := 0 =: d'_0.
      \end{align*}
    \item If $n$ is odd, then we consider the matching~$\pi$ given by
      \begin{align*}
        0 & \leftrightarrow n \\
        1 & \leftrightarrow 1' \\
        2 & \leftrightarrow 2' \\
        & \ \ \ \vdots \\
        n-1 & \leftrightarrow (n-1)' \\
        0' & \leftrightarrow n'.
      \end{align*}
      Moreover, we define
      \begin{align*} 
        d_0 & := 1 =: d_n &
        d_1 & := 1 =: d'_1 \\
        \forall_{j \in \{2, \dots, n-1} \quad d_j & := 0 =: d'_j &
        d'_0 & := 0 =: d'_n.
      \end{align*}
  \end{itemize}

  We choose $x_0 \in S^{n-1}$ and a homeomorphism~$\Delta^{n-1} /
  \partial \Delta^{n-1}$ that maps the point associated 
  with~$\partial \Delta^{n-1}$ to~$x_0$. We denote the constant map with
  value~$x_0$ by~$c \colon \Delta^{n-1} \longrightarrow S^{n-1}$ and
  the projection corresponding to the above homeomorphism by~$p \colon
  \Delta^{n-1} \longrightarrow \Delta^{n-1} / \partial \Delta^{n-1}
  \cong_{\Top} S^{n-1}$. 
  For~$j \in \{0,\dots, n\}$ we then define
  \[ \sigma_{2,j} := 
     \begin{cases}
       c & \text{if $d_j =0$}
       \\
       p & \text{if~$d_j =1$.}
     \end{cases}
  \]
  Because $\sum_{j = 0}^n (-1)^j \cdot d_j = 0$, these maps can be extended 
  to a continuous map~$\sigma_2 \colon \Delta^n \longrightarrow S^{n-1}$ 
  with~$\sigma_2 \circ i_j = \sigma_{2,j}$ for all~$j \in \{0,\dots, n\}$. 
  In the same way, we construct~$\sigma'_2 \colon \Delta^n
  \longrightarrow S^{n-1}$ using~$d'_0, \dots, d'_n$. 

  Finally, we set
  \[ \sigma := (\sigma_1, \sigma_2)
     \quad\text{and}\quad 
     \sigma' := (\sigma'_1, \sigma'_2).\]
  Because the $d_0,\dots, d_n$ and $d'_0, \dots, d'_n$
  are compatible with the matching~$\pi$, the faces of~$\sigma_2$
  and~$\sigma'_2$ cancel according to~$\pi$. Also $\sigma_1$ and
  $\sigma'_1$ are compatible with the matching~$\pi$ because for
  all~$(t_0, \dots, t_n) \in \Delta^n$ we have~$\sum_{j=0}^n t_j = 1$, and 
  so $\sigma_1 \circ i_0 = \sigma_1 \circ i_n$ etc.
  Thus, $\sigma - \sigma'$ is a singular cycle on~$S^1 \times S^{n-1}$.

  It remains to show that $\sigma - \sigma'$ represents~$\fcl{S^1
    \times S^{n-1}}$. To this end we choose singular cocycles~$f_1
  \in~C^1(S^1;\Z)$ and $f_2 \in C^{n-1}(S^{n-1};\Z)$ dual to~$\fcl{S^1}$
  and $\fcl{S^{n-1}}$ respectively; without loss of generality, we 
  may assume that $f_2(c) = 0$. 
  Then the cohomological cross-product~$f_1 \times f_2$ is a cocycle 
  on~$S^1 \times S^{n-1}$ that is dual to~$\fcl{S^1} \times \fcl{S^{n-1}} 
  = \fcl{S^1 \times S^{n-1}}$. Therefore, it suffices to show that
  $(f_1 \times f_2) (\sigma - \sigma') = 1$. Using the explicit 
  description of the cohomological cross-product via the Alexander-Whitney 
  map, we obtain
  \begin{align*}
    (f_1 \times f_2) (\sigma - \sigma') 
    & =
    (-1)^{n-1} \cdot 
   \bigl(f_1 (\fface 1 {\sigma_1}) \cdot f_2(\bface {n-1} {\sigma_2})
  - f_1 (\fface 1 {\sigma'_1}) \cdot f_2(\bface {n-1} {\sigma'_2})
    \bigr)
    \\
    & = (-1)^{n-1} \cdot (1 \cdot d_n - 1 \cdot d'_n)
    \\
    & = 1.
  \end{align*}
  Therefore, $\sigma - \sigma'$ is a fundamental cycle of~$S^1 \times S^{n-1}$.
\end{proof}

In particular, in even dimensions spheres are not the only oriented 
closed connected manifolds with integral simplicial volume equal to~$2$.

We now turn our attention to products of higher-dimensional spheres. In certain 
cases, we can show that these have integral simplicial volume larger than~$2$. 
However, the general case is still wide open.

\begin{lem}\label{lem:betti2}
  Let $M$ be an oriented closed connected $n$-manifold with~$\isv M =2$. 
  Then for all~$k \in \N$ we have:
  \begin{enumerate}
  \item If $k$ is even, then $H_k(M;\Z)$ is cyclic.
  \item If $k$ is odd, then $H_k(M;\Z)$ can be generated by two elements.
  \end{enumerate}
\end{lem}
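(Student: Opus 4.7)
The plan is to carry out the argument of Lemma~\ref{lem:oddhomology} for a two-simplex integral fundamental cycle. Fix one such cycle $c = \epsilon_1 \sigma + \epsilon_2 \sigma'$ with $\epsilon_1, \epsilon_2 \in \{\pm 1\}$ and $\sigma, \sigma' \colon \Delta^n \longrightarrow M$. Poincar\'e duality provides an isomorphism $\args \cap \fcl M \colon H^{n-k}(M;\Z) \longrightarrow H_k(M;\Z)$, and the Alexander--Whitney formula yields, for every cochain $f \in C^{n-k}(M;\Z)$,
\[
  f \cap c = (-1)^{k(n-k)} \bigl( \epsilon_1 f(\bface{n-k}\sigma) \cdot \fface{k}\sigma + \epsilon_2 f(\bface{n-k}\sigma') \cdot \fface{k}\sigma' \bigr),
\]
which lies in the subgroup $A := \Z \cdot \fface{k}\sigma + \Z \cdot \fface{k}\sigma' \subseteq C_k(M;\Z)$. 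For cocycles $f$, this element is a cycle, hence lives in $A \cap Z_k(M;\Z)$; by Poincar\'e duality the resulting classes exhaust $H_k(M;\Z)$, so $H_k(M;\Z)$ is a quotient of $A \cap Z_k(M;\Z)$.

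For part~(2), it suffices to observe that $A$ is generated by two elements; as a subgroup of the free abelian group $C_k(M;\Z)$ it is itself free of rank at most~$2$, so its subgroup $A \cap Z_k(M;\Z)$, and hence its quotient $H_k(M;\Z)$, is generated by at most two elements.

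For part~(1), assume $k$ is even. Lemma~\ref{lem:cyclematching} forbids any single $k$-simplex of even dimension from being a cycle, so both $\partial \fface{k}\sigma$ and $\partial \fface{k}\sigma'$ are non-zero in $C_{k-1}(M;\Z)$. Hence the restriction of $\partial$ to $A$ has non-trivial image; by rank-nullity its kernel $A \cap Z_k(M;\Z)$ has $\Z$-rank at most~$1$, is therefore cyclic, and so is $H_k(M;\Z)$.

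There is no real obstacle: the only inputs are Poincar\'e duality, the chain-level Alexander--Whitney cap product formula, and the parity constraint from Lemma~\ref{lem:cyclematching}, all of which are already in place in the preceding sections.
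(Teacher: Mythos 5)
Your proof is correct and follows essentially the same approach as the paper: Poincar\'e duality combined with the explicit Alexander--Whitney cap-product formula to land in the rank-$\leq 2$ subgroup $A$, then the parity constraint from Lemma~\ref{lem:cyclematching} to cut the rank of $A \cap Z_k(M;\Z)$ to at most $1$ when $k$ is even. Your rank-nullity phrasing just makes explicit what the paper states as ``(up to integral rescaling) at most one $\Z$-linear combination of $\fface k \sigma$ and $\fface k \tau$ that is a cycle.''
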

\begin{proof}
  Because of $\isv M = 2$, there exist singular simplices~$\sigma,
  \tau \colon \Delta^n \longrightarrow M$ such that $\sigma + \tau$
  or~$\sigma - \tau$ is a cycle in~$C_n(M;\Z)$ representing~$\pm
  \fcl M$. Without loss of generality, we may assume that $[\sigma +
    \varepsilon \cdot \tau] = \fcl M \in H_n(M;\Z)$ where~$\varepsilon
  \in \{-1,1\}$.

  It suffices to treat the case $k \in \{1,\dots, n-1\}$. As in the
  proof of Lemma~\ref{lem:oddhomology}, we consider the Poincar\'e 
  duality isomorphism
  \begin{align*}
    \varphi_k := 
    \args \cap \fcl M
    \colon H^{n-k}(M;\Z) & \longrightarrow H_k(M;\Z)
    \\
    [f] & \longmapsto (-1)^{(n-k) \cdot k} \cdot 
    \bigl[ f(\bface {n-k} \sigma) \cdot \fface k \sigma\\
      & \phantom{\longmapsto (-1)^{(n-k) \cdot k}\quad}
      + \varepsilon \cdot f(\bface {n-k} \tau) \cdot \fface k \tau\bigr] 
  \end{align*}
  Clearly, the image of~$\varphi_k$ can be generated by two elements
  or less. If $k$ is even, then neither $\fface k \sigma$ nor $\fface
  k \tau$ is a cycle (Lemma~\ref{lem:cyclematching}); hence, there
  is (up to integral rescaling) at most one $\Z$-linear combination of
  $\fface k \sigma$ and $\fface k \tau$ that is a cycle. Thus, the
  image of~$\varphi_k$ is cyclic. Because $\varphi_k$ is an
  isomorphism and hence surjective, the result follows.
\end{proof}

In general, these bounds cannot be improved as the example of the 
torus in dimension~$2$ shows (Proposition~\ref{prop:surfaces}).

\begin{prop}
  Let $n \in \N$ be even. Then 
  \[ \isv{S^n \times S^n} \geq 4 = \isv{S^n} \cdot \isv{S^n}.
  \]
\end{prop}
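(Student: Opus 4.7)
The plan is to combine the parity lemma with the cyclicity constraint from Lemma~\ref{lem:betti2} applied in the middle dimension. Since $n$ is even, the manifold $S^n\times S^n$ has even dimension~$2n$, so Lemma~\ref{lem:parity} already forces $\isv{S^n\times S^n}$ to be even. It therefore suffices to rule out the value~$2$ (the values $0$ is excluded since the fundamental class is nonzero, and $\geq 1$ is automatic; together with being even, ruling out $2$ gives $\geq 4$).

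I would argue by contradiction. Suppose $\isv{S^n\times S^n}=2$. By the K\"unneth formula the middle-dimensional homology is
\[
H_n(S^n\times S^n;\Z)
\;\cong\; H_0(S^n;\Z)\otimes H_n(S^n;\Z)\;\oplus\; H_n(S^n;\Z)\otimes H_0(S^n;\Z)
\;\cong\; \Z^2,
\]
because $n\geq 2$ is even (so there is no diagonal K\"unneth contribution from $H_{n/2}(S^n;\Z)=0$). Now apply Lemma~\ref{lem:betti2}(1): since the degree~$n$ is even, $H_n(S^n\times S^n;\Z)$ would have to be cyclic. But $\Z^2$ is not cyclic, a contradiction. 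Hence $\isv{S^n\times S^n}\neq 2$, and combined with the parity constraint we conclude $\isv{S^n\times S^n}\geq 4$.

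For the equality $4=\isv{S^n}\cdot\isv{S^n}$ on the right-hand side, one notes that for even $n$ one has $\isv{S^n}=2$: the inequality $\isv{S^n}\geq 2$ follows from Lemma~\ref{lem:cyclematching} (no single-simplex cycle in even dimension) together with Lemma~\ref{lem:parity}, and the reverse inequality is witnessed by a two-simplex fundamental cycle of~$S^n$ (e.g.\ the one obtained from the standard decomposition of~$S^n$ into two hemispheres, generalising the cycle used in Proposition~\ref{prop:surfaces} for~$S^2$).

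The only potential obstacle is the K\"unneth computation itself, which is entirely routine, so there is in fact no hard step here: the real content lies in Lemma~\ref{lem:betti2}, and this proposition is simply a clean application of that lemma to the product of spheres whose middle-dimensional homology fails to be cyclic.
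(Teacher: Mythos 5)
Your proof is correct and follows essentially the same route as the paper: apply Lemma~\ref{lem:parity} (even dimension forces even integral simplicial volume) and then rule out the value~$2$ by applying Lemma~\ref{lem:betti2}(1) in the even degree~$n$, since $H_n(S^n\times S^n;\Z)\cong\Z^2$ is not cyclic. The extra remarks about the K\"unneth diagonal term and the explicit two-hemisphere fundamental cycle of~$S^n$ are fine supporting details but not a different method.
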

\begin{proof}
  On the one hand, $\isv{S^n \times S^n}$ is even
  (Lemma~\ref{lem:parity}). On the other hand, by
  Lemma~\ref{lem:betti2}, we have $\isv {S^n \times S^n} > 2$
  because $H_n(S^n \times S^n;\Z) \cong \Z^2$ and $n$ is assumed to be
  even.
\end{proof}

\begin{rem}\label{rem:s2s4}
  We have
  \[ \isv{S^2 \times S^4} \geq 4 = \isv{S^2} \cdot \isv{S^4}. 
  \]
  Indeed, $\isv{S^2 \times S^4}$ is even by
  Lemma~\ref{lem:parity}. Because this integral simplicial volume is
  non-zero it suffices to prove that $\isv{S^2 \times S^4} \neq 2$.
  
  As in Remark~\ref{rem:32} we consider model spaces for cycles 
  consisting of two singular $6$-simplices. All such cycles are 
  modelled by matchings that are bijections
  \[ \{0,2,4,6, 1', 3',5' \} \longrightarrow \{1,3,5, 0',2',4',6'\}. 
  \]
  Calculating the homology of the model spaces of all possible
  matchings (via a computer) shows that they all have second integral
  homology that does not surject onto~$\Z$. On the other hand, if
  $\isv{S^2 \times S^4}$ were equal to~$2$, then the cap-product
  argument of Remark~\ref{rem:32} shows that the model map on second
  integral homology would need to be surjective. This shows that
  $\isv{S^2 \times S^4} \neq 2$.
\end{rem}

\begin{question}[integral simplicial volume of products]
  If $M$ and $N$ are oriented closed connected manifolds, then the
  ordinary simplicial volume (with \mbox{$\R$-co}\-efficients)
  satisfies~\cite{vbc,benedettipetronio}
  \[ \sv M \cdot \sv N 
     \leq \sv{M \times N}
     \leq {\dim M + \dim N \choose \dim N} \cdot \sv M \cdot \sv N.    
  \]
  The upper estimate is based on the homological cross-product and
  works in the same way also for the integral simplicial volume. But
  the lower estimate depends on the Hahn-Banach theorem, and the
  argument does not carry over to the integral setting. Hence, one
  might ask whether also $\isv M \cdot \isv N \leq \isv {M \times N}$
  holds.
\end{question}

\section{Integral simplicial volume is not computable}
\label{sec:noncomp}

We will now briefly introduce the relevant setup for computability and
derive the non-computability statement of Corollary~\ref{cor:noncomp}
from Theorem~\ref{thm:odd1}:

For $n \in \N$ we let $K_n$ be the countable set of all finite
$n$-dimensional simplicial complexes whose vertices form a subset
of~$\N$ and we let $T_n$ be the subset of all such complexes whose
geometric realisation is an oriented closed connected topological
$n$-manifold. We now consider an injection~$i_n \colon K_n
\longrightarrow \N$ with the property that for every~$K \in K_n$ the
number~$i_n(K)$ can be calculated by integer arithmetic from the set
of simplices, and conversely such that for every~$m \in i_n(K_n)$ the
complex~$K$ with~$i_n(K)=m$ can be reconstructed by integer
arithmetic. For example, such a map~$i_n$ can be obtained by
considering suitable products of prime numbers.

Using~$i_n$, we can interpret notions from computability on~$T_n$: A
function~$f \colon T_n \longrightarrow \N$ is \emph{computable}
[resp.\ \emph{semi-computable}], if the associated function~$ f \circ
i_n^{-1} \colon i_n(T_n) \longrightarrow \N$ is a partial recursive
[resp.\ \emph{$\mu$-recursive}] function~$\N \longrightarrow \N$ with
domain~$i_n(T_n)$. 
A subset~$A \subset T_n$ is called \emph{decidable}
[resp.\ \emph{semi-decidable}] if there is a computable
[resp.\ semi-computable] function~$f \colon T_n \longrightarrow \N$
satisfying~$A = f^{-1}(0)$ and $T_n \setminus A = f^{-1}(1)$. For the exact
definition of ($\mu$-)recursive functions and the relation with Turing
machines we refer to the literature~\cite{bbj}. 

We can now formulate Corollary~\ref{cor:noncomp} in a more precise way:

\begin{cor}[non-computability]
  Let $n \in \N_{\geq 5}$ be odd. 
  \begin{enumerate}
    \item 
      Then the set 
      \[ \bigl\{ K \in T_n \bigm| \isv{\, |K|\,} =1 \bigr\} 
      \]
      is \emph{not} decidable.
    \item
      In particular, the function
      \begin{align*}
        I_n \colon T_n & \longrightarrow \N \\
        K & \longrightarrow \isv{\,|K|\,}
      \end{align*}
      is \emph{not} computable.
  \end{enumerate}
\end{cor}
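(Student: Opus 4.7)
The plan is to reduce the claimed undecidability to a classical unrecognisability result for spheres. By Theorem~\ref{thm:odd1} together with the Poincar\'e conjecture (see the remark following it), for odd~$n$ and $K \in T_n$ we have
\[ \isv{|K|} = 1 \iff |K| \simeq S^n \iff |K| \cong_{\Top} S^n. \]
Thus the set $\bigl\{ K \in T_n \bigm| \isv{|K|} = 1\bigr\}$ coincides with the set of triangulations of the topological $n$-sphere inside~$T_n$. Hence the first assertion of the corollary is equivalent to the undecidability of the homeomorphism problem ``$|K| \cong_{\Top} S^n$\,?'' on~$T_n$, which is the classical $n$-sphere recognition problem.

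The first step, then, is to invoke the theorem of Novikov (see, e.g., the survey by Nabutovsky--Weinberger, or Chernavsky--Leksine) that for every odd~$n \geq 5$ the $n$-sphere recognition problem is undecidable among triangulated oriented closed connected $n$-manifolds. Concretely, one fixes a finitely presented group~$G$ with unsolvable word problem and, via standard constructions (e.g.\ attaching $2$-handles along words in a boundary, and doubling), produces from each word~$w$ a triangulated oriented closed connected $n$-manifold~$M_w \in T_n$ such that $\pi_1(M_w)$ is trivial iff $w =_G 1$, while $M_w$ is always a homotopy sphere iff $w =_G 1$. The map $w \mapsto M_w$ is computable in the sense of our encoding~$i_n$, so a decision procedure for $\{K \in T_n : |K| \cong S^n\}$ would solve the word problem in~$G$, a contradiction. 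This, combined with the equivalence above, gives part~(1).

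For part~(2), I would argue by contraposition: if $I_n \colon T_n \to \N$ were computable, then composing with the computable map $\N \to \{0,1\}$ sending~$1 \mapsto 0$ and $m \mapsto 1$ for~$m \neq 1$ would exhibit a computable characteristic function of~$\{K \in T_n : \isv{|K|} = 1\}$, contradicting part~(1). Hence $I_n$ is not computable.

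The only non-routine ingredient is the citation of $n$-sphere unrecognisability; everything else is a short logical reduction using Theorem~\ref{thm:odd1}. The note right after Corollary~\ref{cor:noncomp} in the introduction already mentions that one could alternatively appeal to Weinberger's non-computability theorem for ordinary simplicial volume of homology spheres, but the approach above is more elementary because Theorem~\ref{thm:odd1} reduces the question directly to sphere recognition rather than to bounded cohomology machinery.
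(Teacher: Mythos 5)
Your proof is correct and follows essentially the same route as the paper: Theorem~\ref{thm:odd1} identifies $\{K \in T_n : \isv{|K|}=1\}$ with the set of triangulated $n$-spheres, one cites undecidability of sphere recognition in dimension~$\geq 5$ (the paper points to Markov; you name Novikov and sketch the word-problem reduction, which is the right mechanism behind that citation), and part~(2) is the same one-line contraposition. The only difference is presentational: you pass through $\cong_{\Top}$ via the Poincar\'e conjecture remark, while the paper stays with the homotopy-equivalence formulation, but these are the same undecidable set.
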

\begin{proof}
  \emph{Ad~1.}
  In view of
  Theorem~\ref{thm:odd1}, we have
  \[ \bigl\{ K \in T_n \bigm| \isv{\, |K| \,} =1 \bigr\} 
     = \bigl\{ K \in T_n \bigm| |K| \simeq S^n \bigr\},
  \]
  which is known to be \emph{not} decidable~\cite{markov}.

  \emph{Ad~2.}  If $I_n$ were computable, then the set $I_n^{-1}(1)
  \subset T_n$ were decidable, which contradicts the first part. 
  Hence, $I_n$ is \emph{not} computable.
\end{proof}

\begin{rem}[semi-computability]
  For all~$n \in \N$ the function~$I_n \colon T_n \longrightarrow
  \N$ is semi-computable in the following sense: If~$m \in \N$, then the
  set
  \[ I_n^{-1} \bigl(\{0,\dots,m\}\bigr) 
     = \bigl\{ K \in T_n 
       \bigm| \isv{\,|K|\,} \leq m \bigr\}
  \]
  is semi-decidable: Let $K \in T_n$. If $c \in C_n(|K|;\Z)$ is 
  a cycle, then the simplicial approximation theorem and an inductive 
  construction of (relative) homotopies shows that there is a~$k\in \N$ 
  and a singular chain~$c' \in C_n(|K|;\Z)$ with the following properties: 
  \begin{itemize}
    \item We have~$[c] = [c']$ in~$H_n(|K|;\Z)$ and $|c'|_1 \leq |c|_1$.
    \item The chain~$c'$ is a \emph{generalised simplicial chain},
      i.e., it consists of singular simplices~$\Delta^n
      \longrightarrow K$ that are geometric realisations of simplicial
      maps from the $k$-fold barycentric subdivision of~$\Delta^n$
      (viewed as simplicial complex in the standard way) to~$K$.
  \end{itemize}
  Moreover, all the following operations can be performed by Turing
  machines: enumerate all barycentric subdivisions of~$\Delta^n$,
  enumerate all simplicial maps between two finite simplicial
  complexes, check whether a generalised simplicial $\Z$-chain on a
  finite simplicial complex in~$T_n$ is a fundamental cycle, and
  compute the $\ell^1$-norm of a generalised simplicial singular $\Z$-chain.

  Hence, there is a Turing machine acting on inputs~$K$ from~$T_n$
  such that: If $\isv{|K|} \leq m$, then the Turing machine stops and
  accepts~$K$. If $\isv{|K|} > m$, then the Turing machine does not
  stop or, if it stops, does not accept~$K$.  In other words, the
  set~$I_n^{-1} (\{0,\dots,m\})$ is semi-decidable.
\end{rem}

However, in certain restricted situations, integral simplicial volume
at least is a finite-to-one map:

\begin{prop}
  Let $m \in \N$. Then there exist only finitely many homeomorphism types of 
  oriented closed connected $3$-manifolds~$M$
  with~$\isv M \leq m$.
\end{prop}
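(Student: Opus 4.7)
The plan is to combine a multi-simplex extension of the model space construction from Section~\ref{sec:model} (previewed in Remark~\ref{rem:32}) with geometrization, promoting combinatorial finiteness into topological finiteness. Given a fundamental cycle $c = \sum_{j=1}^k a_j \cdot \sigma_j$ of $M$ with $|c|_1 \leq m$, I would first split terms so that each $|a_j| = 1$, giving $k \leq m$. The boundary cancellations in $\partial c = 0$ then prescribe a signed matching on the oriented $2$-faces of the $k$ copies of~$\Delta^3$, and hence glue these tetrahedra into a $3$-dimensional $\Delta$-complex (a generalised model space)~$N_c$ carrying a canonical class $\beta_c \in H_3(N_c;\Z)$ and a continuous map $f \colon N_c \longrightarrow M$ with $H_3(f;\Z)(\beta_c) = \fcl M$. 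Since $k \leq m$, only finitely many isomorphism types of such~$N_c$ can arise.

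For each fixed such~$N_c$, the naturality of the cap product combined with Poincar\'e duality (exactly as in the commutative diagram of Remark~\ref{rem:32}) yields that $H_*(f;\Z)$ is split surjective and $\pi_1(f)$ is surjective. Consequently, $\pi_1(M)$ is a quotient of the finitely presented group $\pi_1(N_c)$, which is drawn from a finite list depending only on~$m$; moreover, $H_*(M;\Z)$ is a direct summand of $H_*(N_c;\Z)$, so in particular the rank and the torsion of $H_1(M;\Z)$ are controlled by~$m$.

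The main obstacle is the passage from these algebraic bounds to a finiteness statement for the homeomorphism type of~$M$ itself. For this I would invoke two further ingredients. First, the Kneser--Milnor prime decomposition together with Theorem~\ref{thm:odd1} bounds the number of non-trivial prime summands of~$M$ by~$m/2$, since each non-trivial prime summand satisfies $\isv{} \geq 2$ and integral simplicial volume is subadditive under connected sums of closed orientable $3$-manifolds. Second, the geometrization theorem implies that for prime oriented closed $3$-manifolds the homeomorphism type is essentially determined by the fundamental group (with at most finite ambiguity among spherical space forms sharing a given fundamental group). Together with the bounded presentation length above, this pins down the fundamental group, and hence the homeomorphism type, of each prime summand to a finite list, yielding the desired finiteness.
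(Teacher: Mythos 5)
Your route diverges substantially from the paper's: the paper bundles all combinatorial cycle types of $\ell^1$-norm~$\leq m$ into a single finite complex~$X$ with a class~$\alpha$, uses Thom's representability (valid since $3 \leq 5$) to find a single oriented closed connected $3$-manifold~$N$ together with a degree-one map $N \longrightarrow X \longrightarrow M$ for \emph{every} $M$ with $\isv M \leq m$, and then invokes the Liu / Boileau--Hyam Rubinstein--Wang finiteness theorem for nonzero-degree domination of $3$-manifolds. You instead try to reconstruct this finiteness from prime decomposition and geometrization, and that path, as written, has genuine gaps.

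The most serious gap is the jump from ``$\pi_1(f) \colon \pi_1(N_c) \longrightarrow \pi_1(M)$ is surjective and $\pi_1(N_c)$ lies in a finite list'' to ``$\pi_1(M)$ lies in a finite list''. A finitely presented group has, in general, infinitely many quotients (even $\Z$ does), so surjectivity of $\pi_1(f)$ controls generators of $\pi_1(M)$ but in no way bounds the relators; the ``bounded presentation length'' you appeal to is not established. The split surjectivity of $H_*(f;\Z)$ does bound $H_1(M;\Z)$, but for prime $3$-manifolds (e.g.\ hyperbolic integral homology spheres) $H_1$ gives no control on $\pi_1$.

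A second gap is the assertion that integral simplicial volume is sub- (you need super-) additive under connected sums. The paper explicitly emphasises that $\isv{}$ \emph{lacks} gluing formulae, so this step cannot be taken for granted; neither superadditivity nor even a linear lower bound in terms of the number of nontrivial prime summands is available for $\isv{}$ without new work. (One could try to fall back on the Gromov additivity of ordinary $\sv{}$ together with $\isv{} \geq \sv{}$, but that only controls summands with $\sv{} > 0$ and would have to be combined with the $H_1$-bound for the rest, and the $\pi_1$ problem above would remain.) The paper's argument avoids all of this by offloading the topological finiteness to a single citation on nonzero-degree domination, which is exactly the right tool here.
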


The proof uses models of singular cycles, similar to the ones
discussed in the previous sections. A more detailed description of
such combinatorial models of cycles and their associated homology
classes can be found in the discussion of weak
flexibility~\cite[Section~5.1]{ffsn_rep}.

\begin{proof}
  Let $n \in \N$. Then there exist only finitely many different
  combinatorial types of singular $n$-cycles with $\Z$-coefficients
  and $\ell^1$-norm at most~$m$. These finitely many combinatorial types 
  can be combined into a finite connected simplicial complex~$X$ and 
  a class~$\alpha \in H_n(X;\Z)$ with the following property: If $M$ 
  is an oriented closed connected $n$-manifold with~$\isv M \leq m$, then 
  there is a continuous map~$f \colon X \longrightarrow M$ satisfying
  \[ H_n(f;\Z)(\alpha) = \fcl M. 
  \]
  Now let $n = 3$; in particular, $n\leq 5$. Then by Thom's
  work~\cite{thom} on representing homology classes through manifolds,
  there is an oriented closed connected $n$-manifold~$N$ and a
  continuous map~$g \colon N \longrightarrow X$ with
  \[ H_n(g;\Z)(\fcl N) =\alpha. 
  \]
  Composing these maps, we see that $N$ dominates every oriented closed
  connected $3$-manifold~$M$ with~$\isv M \leq m$ through a map of
  degree~$1$. However, every oriented closed connected $3$-manifold
  can dominate only finitely many other homeomorphism types of
  oriented closed connected $3$-manifolds~\cite{liu,bhrw}.
\end{proof}



\medskip
\vfill

\noindent
\emph{Clara L\"oh}\\[.5em]
  {\small
  \begin{tabular}{@{\qquad}l}
    Fakult\"at f\"ur Mathematik\\
    Universit\"at Regensburg\\
    93040 Regensburg\\
    Germany\\
    \textsf{clara.loeh@mathematik.uni-regensburg.de}\\
    \textsf{http://www.mathematik.uni-regensburg.de/loeh}
  \end{tabular}}
\end{document}